\documentclass[12pt]{amsart}
\usepackage{latexsym}
\usepackage{amssymb}
\usepackage[colorlinks]{hyperref}

\headheight=7pt \textheight=574pt \textwidth=432pt \topmargin=14pt
\oddsidemargin=18pt \evensidemargin=18pt

\newtheorem{theorem}{Theorem}[section]
\newtheorem{lemma}[theorem]{Lemma}
\newtheorem{proposition}[theorem]{Proposition}
\newtheorem{corollary}[theorem]{Corollary}
\newtheorem{conjecture}[theorem]{Conjecture}

\theoremstyle{definition}

\theoremstyle{remark}

\numberwithin{equation}{theorem}

\newcommand{\PGL}{{\mathrm {PGL}}}
\newcommand{\SL}{{\mathrm {SL}}}
\newcommand{\PSL}{{\mathrm {PSL}}}

\newcommand{\PGU}{{\mathrm {PGU}}}

\newcommand{\PSU}{{\mathrm {PSU}}}

\newcommand{\Sp}{{\mathrm {Sp}}}
\newcommand{\PSp}{{\mathrm {PSp}}}

\newcommand{\Ker}{\operatorname{Ker}}
\newcommand{\Aut}{{\mathrm {Aut}}}

\newcommand{\Irr}{{\mathrm {Irr}}}

\newcommand{\RR}{{\mathbb R}}

\newcommand{\FF}{{\mathbb F}}

\newcommand{\Center}{\mathbf{Z}}

\newcommand{\acd}{\mathrm{acd}}

\newcommand{\bC}{{\mathbf{C}}}
\newcommand{\bO}{{\mathbf{O}}}
\newcommand{\bN}{{\mathbf{N}}}
\newcommand{\bZ}{{\mathbf{Z}}}
\newcommand{\Al}{\textup{\textsf{A}}}
\newcommand{\Sy}{\textup{\textsf{S}}}
\newcommand{\tw}[1]{{}^#1}
\renewcommand{\mod}{\bmod \,}

\begin{document}

\title[characters of even degree and normal Sylow
$2$-subgroups] {Irreducible characters of even degree\\ and normal
Sylow $2$-subgroups}

\author{Nguyen Ngoc Hung}
\address{Department of Mathematics, The University of Akron, Akron,
Ohio 44325, USA} \email{hungnguyen@uakron.edu}

\author{Pham Huu Tiep}
\address{Department of Mathematics, University of Arizona, Tucson,
Arizona 85721, USA}
\email{tiep@math.arizona.edu}

\thanks{N.\,N. Hung was partially supported by the NSA Young
  Investigator Grant \#H98230-14-1-0293 and a Faculty Scholarship
  Award from Buchtel College of Arts and Sciences, The University
  of Akron.}
\thanks{P.\,H. Tiep was partially supported by the NSF grant DMS-1201374 and the Simons
  Foundation Fellowship 305247.}

\subjclass[2010]{Primary 20C15, 20D10, 20D05}

\keywords{finite groups, character degrees, normal subgroups, Sylow
subgroups, real characters, strongly real characters}

\date{\today}

\begin{abstract} The classical It\^{o}-Michler theorem on character
degrees of finite groups asserts that if the degree of every complex irreducible character of a finite group
$G$ is coprime to a given prime $p$, then $G$ has a normal Sylow $p$-subgroup. We propose a new
direction to generalize this theorem by introducing an invariant
concerning character degrees. We show that if the average degree of
linear and even-degree irreducible characters of $G$ is less than
$4/3$ then $G$ has a normal Sylow $2$-subgroup, as well as
corresponding analogues for real-valued characters and strongly real
characters. These results improve on several earlier results
concerning the It\^{o}-Michler theorem.
\end{abstract}

\maketitle


\section{Introduction}

The celebrated It\^{o}-Michler theorem \cite{Ito,Michler} is one of
the deep and fundamental results on the relation between character
degrees and local structure of finite groups. It asserts that if a
prime $p$ does not divide the degree of every complex irreducible
character of a finite group $G$, then $G$ has a normal abelian Sylow
$p$-subgroup.

Over the past decades, there have been several variations and
refinements of this result by considering Brauer characters
\cite{Manz, Manz-Wolf,Michler2}, nonvanishing elements
\cite{Dolfi-Pacifini-Sanus-Spiga, Navarro-Tiep}, fields of character
values \cite{Dolfi-Navarro-Tiep,Navarro-Tiep,Isaacs-Navarro}, and
Frobenius-Schur indicator \cite{Marinelli-Tiep,Tiep}. One primary
direction is to weaken the condition that all irreducible characters
of $G$ have degree coprime to $p$, and assume instead only that a
subset of characters with a specified field of values has this
property, see \cite{Dolfi-Navarro-Tiep,Marinelli-Tiep} for
real-valued characters and \cite{Navarro-Tiep} for $p$-rational
characters.

In this paper, we introduce an invariant concerning character
degrees and propose to generalize the It\^{o}-Michler theorem in a
completely new direction.

Given a finite group $G$, let $\Irr(G)$ denote the set of all
complex irreducible characters of $G$, then write
\[
\Irr_p(G):=\{\chi\in\Irr(G)\mid \chi(1)=1 \text { or } p\mid
\chi(1)\}\] and \[\acd_p(G):=\frac{\sum_{\chi\in\Irr_p(G)}
\chi(1)}{|\Irr_p(G)|}
\]
so that $\acd_p(G)$ is the average degree of linear characters and
irreducible characters of $G$ with degree divisible by $p$. Then the
It\^{o}-Michler theorem can be reformulated in the following way:

\begin{quote}{\emph{If $\acd_p(G)=1$ then $G$ a has normal abelian Sylow
$p$-subgroup}}.\end{quote}

Our first result significantly improves this for the prime $p=2$.

\begin{theorem}\label{theorem-main-1}
Let $G$ be a finite group. If $\acd_2(G)<4/3$ then $G$ has a normal
Sylow $2$-subgroup.
\end{theorem}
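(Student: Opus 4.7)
Suppose $G$ is a minimal counterexample: $\acd_2(G)<4/3$ while $G$ has no normal Sylow $2$-subgroup. The It\^{o}--Michler theorem supplies at least one character of even degree in $\Irr(G)$. Writing $k=|G:G'|$ for the number of linear characters of $G$ and $d_1,\ldots,d_m$ for the even degrees appearing in $\Irr(G)$ (with multiplicity), the inequality $\acd_2(G)<4/3$ rearranges to
$$k \;>\; \sum_{i=1}^{m}(3d_i-4) \;\ge\; 2m,$$
the last step because each $d_i\ge 2$. So the minimal counterexample has strictly more than twice as many linear as even-degree irreducible characters. This already handles a nonabelian finite simple group $S$: there $k=1$ would force $m=0$, and It\^{o}--Michler would then give $S$ a normal Sylow $2$-subgroup---impossible.

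\textbf{Structural reductions.} I would next aim to show $O_{2'}(G)=1$. If $N=O_{2'}(G)\neq 1$, then $G/N$ again has no normal Sylow $2$-subgroup; using Clifford theory relative to $N$, and the odd-order hypothesis on $N$, one seeks to show that characters of $G$ lying over nontrivial $\theta\in\Irr(N)$ contribute to $\acd_2(G)$ with average degree at least $\acd_2(G/N)$. This would give $\acd_2(G/N)\le \acd_2(G)<4/3$, contradicting minimality. A similar strategy, together with Fitting's lemma and an analysis of the generalised Fitting subgroup $F^*(G)$, should reduce to the case where $F^*(G)$ is either a normal $2$-subgroup or a direct product of isomorphic nonabelian simple groups. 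In the first case a direct character-theoretic count combined with the arithmetic bound $k>2m$ contradicts the assumption that $G$ has no normal Sylow $2$-subgroup.

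\textbf{Almost-simple case and main obstacle.} In the remaining case, Clifford theory relative to $F^*(G)$ reduces matters to an almost-simple problem: for each nonabelian finite simple $S$ and each $G$ with $S\le G\le \Aut(S)$, show $\acd_2(G)\ge 4/3$. Here $|G:G'|\le |\Out(S)|$ is small, while $G$ has abundant even-degree irreducible characters inherited from $S$, so the inequality $k>2m$ fails comfortably; the proof amounts to a classification-based case check, using the Atlas for sporadic and low-rank Lie-type groups and generic degree formulas (Steinberg together with unipotent and semisimple characters) for alternating and Lie-type families. The principal obstacle is the reduction to $O_{2'}(G)=1$, because the invariant $\acd_2$ is not monotone under quotients in general; one must show carefully, exploiting that $|N|$ is odd, that adjoining the characters nontrivial on $N$ cannot drive the average below that of $G/N$. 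Once this reduction is secured, the remaining analysis---though long in case-by-case detail---follows the standard template for almost-simple group arguments.
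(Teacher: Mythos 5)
Your opening computation is correct: $\acd_2(G)<4/3$ is equivalent to $n_1(G)>\sum_i(3d_i-4)\ge 2m$. But the proposal has a genuine gap, and it also concentrates its effort on the wrong case. The nonsolvable situation is not where the difficulty of this theorem lies: the paper first proves (Theorem 1.2) that $\acd_2(G)<5/2$ already forces $G$ to be solvable, so under the hypothesis $\acd_2(G)<4/3$ there simply is no almost-simple case and no nonabelian layer of $F^*(G)$ to analyze. The real content of Theorem 1.1 is the solvable case, which you dispose of with the sentence that ``a direct character-theoretic count combined with the arithmetic bound $k>2m$'' finishes it --- but no such direct count is available; the inequality $n_1(G)>2m$ by itself says nothing about where the even-degree characters come from or why their absence in sufficient number forces $P\unlhd G$. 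The paper's route is: take a minimal normal $N\le G'$ (abelian, by solvability), use the easy monotonicity $\acd_2(G/N)\le\acd_2(G)$ (valid precisely because every $\chi\in\Irr(G)$ with $N\not\le\Ker(\chi)$ has degree $\ge 2>\acd_2(G)$), apply induction to get a normal Sylow $2$-subgroup of $G/N$, and reduce via Frattini to $G=N\rtimes\bN_G(P)$ with $N$ of odd order, $[N,G]=N$, and no nonprincipal character of $N$ invariant under $\bN_G(P)$. The decisive step is then Lemma 4.1: if $\acd_2(G)<4/3$, there is no orbit of even size of $\bN_G(P)$ on $\Irr(N)$; since the orbits of the $2$-group $P$ have $2$-power size, $P$ centralizes $N$ and hence $P\unlhd G$. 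That orbit-counting lemma --- comparing the induced degrees $|G:I_i|\lambda(1)$ against $n_1(G/N)$ and exploiting $n_1(G/N)\le |G:I_j|\,n_1(I_j/N)$ --- is the heart of the proof and is entirely absent from your plan.

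A second, smaller problem is the reduction to $\bO_{2'}(G)=1$. You correctly flag the monotonicity of $\acd_2$ under this quotient as the principal obstacle, but you do not resolve it, and in the form you need it can fail: for $N=\bO_{2'}(G)$ not contained in $G'$, the characters of $G$ lying over nontrivial $\theta\in\Irr(N)$ may include linear characters, which drag $\acd_2(G)$ \emph{below} $\acd_2(G/N)$ and destroy the inductive step. The paper sidesteps this entirely by only ever passing to quotients by minimal normal subgroups contained in $G'$, where all the discarded characters have degree at least $2$ and monotonicity is immediate. As it stands, the proposal is a plausible-looking reduction scheme for a different kind of theorem (one whose difficulty sits in the almost-simple case), not a proof of this one.
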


\noindent Theorem~\ref{theorem-main-1} basically says that even when
a group has some irreducible characters of even degree, it still has
a normal Sylow $2$-subgroup as long as the number of linear
characters of the group is large enough. This of course implies the
It\^{o}-Michler theorem for $p=2$ where it is required that the
group has no irreducible characters of even degree at all.

One of the key steps in the proof of Theorem~\ref{theorem-main-1} is
to establish the solvability of the groups in consideration. In
fact, we can do more.

\begin{theorem}\label{theorem-main-2}
Let $G$ be a finite group. If $\acd_2(G)<5/2$ then $G$ is solvable.
\end{theorem}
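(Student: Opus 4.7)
The overall strategy is a minimal counterexample argument combined with the Classification of Finite Simple Groups. The threshold $5/2$ is sharp at $A_5$: its irreducible degrees are $1,3,3,4,5$, so $\Irr_2(A_5)$ consists of the trivial character and the unique character of degree $4$, giving $\acd_2(A_5) = 5/2$ exactly. Let $G$ be a non-solvable group with $\acd_2(G) < 5/2$ of minimum order; the aim is to derive a contradiction.

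First I would study how $\acd_2$ behaves under normal subgroups. If $N \trianglelefteq G$, inflation embeds $\Irr_2(G/N)$ into $\Irr_2(G)$ preserving degrees, so the characters of $\Irr_2(G)$ split into those trivial on $N$ and those not. The goal is a comparison lemma bounding $\acd_2(G)$ below by a weighted combination of $\acd_2(G/N)$ and the average degree of characters in $\Irr_2(G)$ that are non-trivial on $N$. Clifford theory will supply the needed bounds for the latter: for $1_N \neq \theta \in \Irr(N)$ with inertia group $T$, every character of $G$ lying over $\theta$ has degree divisible by $[G:T]\,\theta(1)$, and the number of such constituents is controlled by the projective representation theory of $T/N$. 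Iterating and invoking the inductive hypothesis on the proper quotient $G/N$, one reduces to the case where $G$ has trivial solvable radical and a unique minimal normal subgroup isomorphic to $S^k$ for some non-abelian simple $S$, so that $G$ embeds in $\Aut(S)\wr \Sy_k$.

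Second, I would handle this reduced almost-quasisimple setting via CFSG, proving $\acd_2(G)\geq 5/2$ with equality only when $G = A_5$. For alternating groups $A_n$ and symmetric groups $\Sy_n$ with $n\geq 6$, the rapidly growing supply of even-degree hook characters pushes $\acd_2$ easily above $3$. For sporadic groups a finite check against the ATLAS suffices. For $\PSL_2(q)$ a direct computation is available: when $q = 2^n\geq 8$, the only even-degree irreducible is the Steinberg character of degree $q$, so $\acd_2 = (1+q)/2 \geq 9/2$; when $q$ is odd with $q\geq 7$, several principal and discrete series characters of degrees $q\pm 1$ and $(q\pm 1)/2$ are even, giving $\acd_2 > 5/2$ directly. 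For higher-rank classical and exceptional Lie-type groups, universal lower bounds on the smallest non-trivial character degree of Landazuri--Seitz type, combined with counts of even-degree unipotent and semisimple characters, force $\acd_2$ well above $5/2$. The extension to the full $\Aut(S)$-range is handled by observing that outer automorphisms contribute at most $|\Out(S)|$ extra linear characters while preserving enough even-degree characters inflated from $S$ to keep the average above $5/2$.

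The main obstacle I anticipate is the comparison lemma in the reduction step. The invariant $\acd_2$ is not obviously monotone under taking quotients; extra linear characters arising in a quotient can depress the average, whereas even-degree characters lying over non-trivial characters of normal subgroups come with Clifford multiplicities that depend delicately on the inertia structure. The crucial technical work consists in extracting enough even-degree characters of controlled degree from the interplay between $N$, the solvable radical, and the almost-simple section, so that the lower bound established in the CFSG step can be transferred back to $G$. Once this accounting is in place, the sharpness at $A_5$ precludes any further non-solvable example, delivering the contradiction.
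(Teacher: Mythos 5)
Your overall architecture (minimal counterexample, a CFSG-based lower bound when a nonabelian minimal normal subgroup is present, and a reduction step through an abelian minimal normal subgroup) matches the paper's, but there is a genuine gap exactly at the point you flag as ``the main obstacle'': the case where the minimal normal subgroup $N$ is abelian is not resolved, and it does not reduce to the case of trivial solvable radical. When $N$ is abelian and contained in $G'$, induction gives $\acd_2(G/N)\geq 5/2$ while $\acd_2(G)<5/2$; since $n_1(G/N)=n_1(G)$ and $n_k(G/N)\leq n_k(G)$, the only way the average can drop is that $G$ has an irreducible character $\chi$ of degree $2$ with $N\nsubseteq\Ker(\chi)$. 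The paper then invokes the analysis from the proof of Theorem~2.2 of Isaacs--Loukaki--Moret\'o to conclude that $G=LC$ is a central product with $L=\SL_2(5)$ and $N=\Center(L)$ of order $2$, and finishes by an explicit count (following Moret\'o--Nguyen) showing $n_2(G)=2n_1(G)+n_2(C/N)$, $n_4(G)\geq 2n_1(G)$, $n_6(G)\geq n_1(G)+2n_2(C/N)$, whence $3n_1(G)+n_2(G)<\sum_{2|k\geq 4}(2k-5)n_k(G)$ and $\acd_2(G)>5/2$. Your proposal describes what such an accounting would have to accomplish but supplies neither the structural identification of $G$ nor the count, so the core of the argument is missing. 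Note also that the paper's choice of $N$ is more delicate than ``any minimal normal subgroup'': it is taken inside a minimal nonsolvable normal subgroup $H$, and inside $[H,\bO_\infty(H)]$ when that is nontrivial, which is what makes the Isaacs--Loukaki--Moret\'o machinery applicable.

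A secondary weakness is in your CFSG step. The paper does not compute $\acd_2$ of almost simple groups directly; the relevant groups are all $G$ with a nonabelian minimal normal subgroup $N\cong S^k$, which after killing $\bC_G(N)$ sit between $S^k$ and $\Aut(S)\wr\Sy_k$. Here $n_1(G)$ can be as large as roughly $|\Out(S)|^k\,k!$, so the assertion that outer automorphisms contribute ``at most $|\Out(S)|$ extra linear characters'' is not adequate for $k>1$ (nor does it address how the even-degree characters lying over nontrivial characters of $N$ fuse and induce). The paper's remedy is Theorem~2.1: every nonabelian simple $S$ has an even-degree $\theta$ with $\theta(1)\geq 8$ (or $\geq 4$ for $\Al_5$) extending to a strongly real character of its stabilizer in $\Aut(S)$; this propagates to $N=S^k$ and, via Gallagher and Clifford (Proposition~2.3), yields $n_1(G)\leq n_d(G)|G:I_G(\varphi)|$ and a companion bound on $n_2(G)$, from which $\acd_2(G)\geq 5/2$ follows by the weighted inequality $3n_1(G)+n_2(G)\leq\sum_{2|k\geq 4}(2k-5)n_k(G)$. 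If you want to salvage your approach, you would need to prove an extendibility statement of this strength rather than a degree-count for $S$ alone.
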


\noindent We remark that both bounds in
Theorems~\ref{theorem-main-1} and \ref{theorem-main-2} are optimal,
as shown by the groups $\Sy_3$ and $\Al_5$. Furthermore, $\acd_2(G)$
of non-abelian $2$-groups $G$ can get as close to $1$ as we wish --
just consider the extraspecial $2$-groups, and therefore one can not
get the commutativity of the Sylow $2$-subgroup in
Theorem~\ref{theorem-main-1} as in the It\^{o}-Michler theorem.

In fact, we can also improve on some main results of
\cite{Dolfi-Navarro-Tiep} and \cite{Marinelli-Tiep}, by restricting
our attention to only \emph{real-valued} characters or even
\emph{strongly real} characters. A character $\chi\in\Irr(G)$ is
called \emph{real-valued} if $\chi(g)\in\RR$ for all $g\in G$, and
{\it strongly real} if it has Frobenius-Schur indicator $1$,
equivalently, if it is afforded by a real representation. Let
\[
\Irr_{p,\RR}(G):=\{\chi\in\Irr_p(G)\mid \chi \text{ is
real-valued}\},~
\acd_{p,\RR}(G):=\frac{\sum_{\chi\in\Irr_{p,\RR}(G)}
\chi(1)}{|\Irr_{p,\RR}(G)|},
\]
and
\[
\Irr_{p,+}(G):=\{\chi\in\Irr_p(G)\mid \chi \text{ is strongly real}\},~
\acd_{p,+}(G):=\frac{\sum_{\chi\in\Irr_{p,+}(G)}
\chi(1)}{|\Irr_{p,+}(G)|}
\]

\begin{theorem}\label{theorem-main-3}
Let $G$ be a finite group. We have:
\begin{enumerate}
\item[(i)] If $\acd_{2,+}(G)\leq 2$ then $G$ is solvable.
\item[(ii)] If $\acd_{2,+}(G)<4/3$ then $G$ has a normal Sylow $2$-subgroup.
\end{enumerate}
\end{theorem}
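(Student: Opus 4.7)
The plan is to adapt the arguments developed for Theorems~\ref{theorem-main-1} and~\ref{theorem-main-2} to the strongly real setting, rather than to reduce directly to them. Since the strongly real even-degree irreducibles form a subset of those defining $\acd_2(G)$, the two averages cannot in general be compared in the useful direction (many irreducibles of even degree contribute to $\acd_2$ without contributing to $\acd_{2,+}$), so a parallel but independent analysis is required.

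For part~(i), I would proceed by a minimal counterexample argument: suppose $G$ is non-solvable of minimal order with $\acd_{2,+}(G) \leq 2$. Standard reductions via chief series and inflation---noting that a strongly real character of a quotient $G/N$ inflates to a strongly real character of $G$---push us into the almost simple case. The heart of the argument is then to prove, using the classification of finite simple groups, that $\acd_{2,+}(S) > 2$ for every non-abelian finite simple group $S$. For alternating groups this uses the near-total rationality of symmetric group characters together with degree lower bounds for non-trivial partitions; for sporadic groups it is a finite check against the ATLAS; for the finite simple groups of Lie type---the main technical workload---one exhibits a controlled family of unipotent or Lusztig-series characters of even degree and Frobenius--Schur indicator $+1$, paralleling the strategy used for real-valued characters in \cite{Marinelli-Tiep, Tiep}. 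Finally, lifting from $S$ to the almost simple group with socle $S$ is handled via Clifford theory applied to real-class/strongly-real-class extensions.

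For part~(ii), part~(i) together with $4/3 < 2$ forces $G$ to be solvable. I would then rerun the solvable-case portion of the proof of Theorem~\ref{theorem-main-1} with $\acd_{2,+}$ in place of $\acd_2$: reduce to $\bO_{2'}(G) = 1$, take a minimal normal subgroup $V$ on which a Sylow $2$-subgroup acts faithfully, and use Clifford theory on a $G$-orbit of $\Irr(V)$ of size divisible by $2$ to produce strongly real even-degree irreducibles of $G$. The extremal configuration is $G = \Sy_3$, with $\acd_{2,+}(\Sy_3) = 4/3$, and one shows that any deviation from this configuration forces the average strictly above $4/3$, either by increasing the sum of even-degree strongly real degrees or by decreasing the number of real linear characters.

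The main obstacle is the simple-group step in part~(i): establishing $\acd_{2,+}(S) > 2$ uniformly across the CFSG families, especially for small-rank groups of Lie type, where Frobenius--Schur indicators must be pinned down explicitly rather than just lower-bounded degree-wise. A secondary difficulty lies in part~(ii), namely ensuring that the solvable-case counting inequalities of Theorem~\ref{theorem-main-1} restrict cleanly to the strongly real subset; this will likely require supplementing those arguments with Frobenius--Schur counting identities so that the relevant inequalities survive the passage from $\Irr_2(G)$ to $\Irr_{2,+}(G)$.
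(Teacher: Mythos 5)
Your plan for part (i) has a genuine gap at the reduction step. The passage ``standard reductions via chief series and inflation push us into the almost simple case'' is not available for the invariant $\acd_{2,+}$: inflation does let you assume the minimal normal subgroup $N \subseteq G'$ is nonabelian (so $N \cong S^k$), but $\acd_{2,+}$ does not behave monotonically under passage to subgroups, to $G/\bC_G(N)$ followed by projection onto one simple factor, or to socle sections, so knowing $\acd_{2,+}(A) > 2$ for every almost simple $A$ (let alone for every simple $S$) would not control $\acd_{2,+}(G)$. The paper never leaves $G$: the CFSG input is Theorem~\ref{theorem-extendible-characters-simplegroups}, which produces a single even-degree $\theta \in \Irr(S)$ with $\theta(1) \geq 4$ extending to a strongly real character of $I_{\Aut(S)}(\theta)$; this is promoted to $\varphi = \theta \times \cdots \times \theta \in \Irr(N)$ (Theorem~\ref{theorem-extendible-characters-G}), and then a Gallagher--Clifford count (Proposition~\ref{proposition-n1-n2}(ii)) gives $n_{1,+}(G) \leq n_{d,+}(G)\,|G:I_G(\varphi)|$ with $d = \varphi(1)|G:I_G(\varphi)|$; since $\varphi(1) \geq 4$ this yields $n_{1,+}(G) < \sum_{2 \mid k}(k-2)n_{k,+}(G)$, i.e.\ $\acd_{2,+}(G) > 2$. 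So the heart of the matter is an extendibility statement for one well-chosen character, not a lower bound on $\acd_{2,+}(S)$; the heart you propose is both harder to prove and insufficient to conclude.

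For part (ii), the solvability reduction is right, but the orbit-counting of Lemma~\ref{lemma-orbit} does not ``restrict cleanly'' to strongly real characters, and the missing ingredient is not a Frobenius--Schur counting identity. Since $N$ has odd order it has no nontrivial real irreducible characters, so an even-size orbit on $\Irr(N)$ generically contributes only \emph{one} strongly real irreducible of $G$ per conjugate pair $\{\varphi, \overline{\varphi}\}$ (via Lemma~\ref{lemma-strong-real-2}, which requires $\overline{\varphi}$ to be $G$-conjugate to $\varphi$), rather than the $n_1(I_i/N)$ characters per orbit exploited in Lemma~\ref{lemma-orbit}. The supply of strongly real even-degree characters is therefore bounded essentially by the number of orbits on $\Irr(N)\smallsetminus\{1_N\}$, and to balance the $n_{1,+}(G) = |P:\Phi(P)|$ strongly real linear characters one needs the inequality $|P:\Phi(P)| \leq |N|-1$. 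The paper supplies exactly this (Lemma~\ref{lemma-strongly-real-3}) via Isaacs' theorem on the number of generators of a faithful linear $2$-group, $d(P) \leq [3n/2]$, combined with $2^{[3n/2]} \leq 3^n-1$. It also reduces to $G = N \rtimes P$ with $P$ a Sylow $2$-subgroup (not $N \rtimes \bN_G(P)$ as in Theorem~\ref{theorem-main-1}), using that $G/NP$ has odd order so strongly real linear characters of $NP$ and of $G$ correspond bijectively (Lemma~\ref{lemma-strong-real}). Neither ingredient appears in your outline, and without the first the average could a priori drop below $4/3$.
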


Theorem \ref{theorem-main-3}(ii) immediately implies
\cite[Theorem~A]{Dolfi-Navarro-Tiep} and
\cite[Theorem~B]{Marinelli-Tiep}. Furthermore, since any real-valued
character of degree 1 is automatically strongly real, it has the
following consequence.

\begin{corollary}\label{corollary-main-4}
Let $G$ be a finite group. We have:
\begin{enumerate}
\item[(i)] If $\acd_{2,\RR}(G)\leq 2$ then $G$ is solvable.
\item[(ii)] If $\acd_{2,\RR}(G)<4/3$ then $G$ has a normal Sylow $2$-subgroup.
\end{enumerate}
\end{corollary}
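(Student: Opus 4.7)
The plan is to deduce Corollary~\ref{corollary-main-4} directly from Theorem~\ref{theorem-main-3} by comparing the two averages $\acd_{2,\RR}(G)$ and $\acd_{2,+}(G)$. The key observation, already noted before the statement of the corollary, is that every real-valued linear character takes only the values $\pm 1$ and is therefore strongly real. Consequently the linear parts of $\Irr_{2,\RR}(G)$ and $\Irr_{2,+}(G)$ coincide, and
\[
\Irr_{2,\RR}(G) \setminus \Irr_{2,+}(G) = \{\chi \in \Irr(G) : \chi \text{ is real-valued of Frobenius--Schur indicator } -1\}.
\]
By the classical Frobenius--Schur theorem every such $\chi$ has even degree, so $\chi(1) \geq 2$. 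In particular, each character in this symmetric difference contributes at least $2$ to the numerator and exactly $1$ to the denominator of $\acd_{2,\RR}(G)$ beyond what it would contribute to $\acd_{2,+}(G)$.

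With this in hand, set $A := \sum_{\chi \in \Irr_{2,+}(G)} \chi(1)$ and $N := |\Irr_{2,+}(G)|$, and let $C$ and $K$ denote the corresponding sum of degrees and count over the set $\Irr_{2,\RR}(G) \setminus \Irr_{2,+}(G)$, so that $\acd_{2,+}(G) = A/N$, $\acd_{2,\RR}(G) = (A+C)/(N+K)$, and $C \geq 2K$. For part~(i), assume $\acd_{2,\RR}(G) \leq 2$, so $A+C \leq 2(N+K)$. Then
\[
A \;\leq\; 2(N+K) - C \;\leq\; 2(N+K) - 2K \;=\; 2N,
\]
so $\acd_{2,+}(G) \leq 2$ and Theorem~\ref{theorem-main-3}(i) yields the solvability of $G$. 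For part~(ii), assume $\acd_{2,\RR}(G) < 4/3$, so $A+C < (4/3)(N+K)$. Then
\[
A \;<\; \tfrac{4}{3}(N+K) - C \;\leq\; \tfrac{4}{3}N + \tfrac{4}{3}K - 2K \;=\; \tfrac{4}{3}N - \tfrac{2}{3}K \;\leq\; \tfrac{4}{3}N,
\]
so $\acd_{2,+}(G) < 4/3$ and Theorem~\ref{theorem-main-3}(ii) supplies a normal Sylow $2$-subgroup.

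There is essentially no obstacle here beyond verifying the three elementary inputs above (the inclusion $\Irr_{2,+}(G) \subseteq \Irr_{2,\RR}(G)$, the equality of their linear subsets, and the degree bound $\chi(1)\geq 2$ for indicator-$(-1)$ characters); the substantive work has already been carried out in the proof of Theorem~\ref{theorem-main-3}. The only sensitivity of the argument is that the slope of the averaging inequality needs $2$ to sit on the correct side of the thresholds $4/3$ and $2$, which is precisely why the bound $2$ on indicator-$(-1)$ degrees is used in both places.
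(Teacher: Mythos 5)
Your proof is correct and matches the paper's route: the paper deduces the corollary from Theorem~\ref{theorem-main-3} via the single observation that real-valued linear characters are strongly real, so that $\Irr_{2,\RR}(G)\setminus\Irr_{2,+}(G)$ consists only of characters of degree at least $2$, and hence passing from $\acd_{2,\RR}(G)$ to $\acd_{2,+}(G)$ cannot push the average above either threshold. You have simply written out the averaging inequality that the paper leaves implicit.
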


Again, the example of $\Sy_3$ shows that the bounds in
Theorem~\ref{theorem-main-3}(ii) and
Corollary~\ref{corollary-main-4}(ii) are optimal.

To prove Theorems~\ref{theorem-main-2} and \ref{theorem-main-3}(i),
we have to use the classification of finite simple groups to show
that every nonabelian finite simple group $S$ possesses an
irreducible character of even and large enough degree which is
extendible to its stabilizer in $\Aut(S)$,
cf.~Theorem~\ref{theorem-extendible-characters-simplegroups}.
Together with Proposition~\ref{proposition-n1-n2}, this result
allows us to bound the number of (strongly real) irreducible
characters of small degrees of a finite group with a nonabelian
minimal normal subgroup, and then to control the invariant
$\acd_2(G)$ of such a group, see Section~\ref{section-preliminary}
and Proposition~\ref{proposition-5/2}. We hope that the techniques
developed here will be useful in the future study of other problems
involving the average degree of a certain set of characters and
other invariants concerning character degrees like the largest
character degree \cite{Cossey-Halasi-Maroti-Nguyen,Gluck,HHN,HLS} or
the character degree ratio \cite{Cossey-Nguyen,Lewis-Nguyen}.

One obvious question that one may ask is: is there an analogue of
Theorem~\ref{theorem-main-1} for odd primes? Although our ideas in
the proof for the prime $2$ do not carry out smoothly to odd primes,
we believe that the following is true.

\begin{conjecture}\label{conjecture}
Let $p$ be a prime and $G$ a finite group. If $\acd_p(G)<2p/(p+1)$
then $G$ has a normal Sylow $p$-subgroup.
\end{conjecture}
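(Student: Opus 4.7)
The plan is to adapt the two-stage framework that works for $p=2$: first establish a solvability analog of Theorem~\ref{theorem-main-2} under a weaker bound, then deduce the normal Sylow $p$-subgroup conclusion under the sharper hypothesis $\acd_p(G)<2p/(p+1)$. The extremal examples appear to be $\Sy_3$ for $p=2$ and $\Al_4$ for $p=3$ (both Frobenius groups with kernel equal to the Sylow $p$-subgroup); for general $p$ one expects an analogous small Frobenius-type group to realize the bound, and identifying this extremal family is part of the problem.

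\emph{Stage 1 (solvability).} I would seek a constant $c_p>2p/(p+1)$ such that $\acd_p(G)\leq c_p$ forces $G$ to be solvable; $c_p$ should be pinned down by the character table of the smallest nonabelian simple group whose order is divisible by $p$ (for $p=2$, the threshold $5/2$ is controlled by $\Al_5$). The engine is a general-$p$ analog of Theorem~\ref{theorem-extendible-characters-simplegroups}: every nonabelian finite simple group $S$ with $p\mid|S|$ carries an irreducible character of large degree divisible by $p$ that extends to its stabilizer in $\Aut(S)$. For simple groups of Lie type in defining characteristic $p$, the Steinberg character of degree $|S|_p$ is the natural candidate and its extendibility to $\Aut(S)$ is classical. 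For $\Al_n$ with $n\geq p$, one locates characters indexed by partitions whose hook lengths force $p\mid\chi(1)$. Sporadic groups are checked individually from their character tables. Combined with Proposition~\ref{proposition-n1-n2}, this would control $\acd_p(G)$ of any group having a nonabelian minimal normal subgroup.

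\emph{Stage 2 (normal Sylow $p$-subgroup).} Once solvability is in hand, I would proceed by induction on $|G|$. Let $N$ be a minimal normal subgroup, an elementary abelian $r$-group. When $r=p$ one reduces to $G/N$ after noting that linear characters and characters of degree divisible by $p$ of $G/N$ lift to such characters of $G$, so $\acd_p(G/N)\leq\acd_p(G)<2p/(p+1)$, and a normal Sylow $p$-subgroup of $G/N$ lifts through $N$. When $r\neq p$, one analyzes $\Irr_p(G)$ via Clifford theory relative to $N$: characters of $G$ lying over a $G$-invariant $\lambda\in\Irr(N)$ either extend (producing many linear characters when $\lambda$ is linear) or induce to characters of degree divisible by $p$ when inertia shrinks. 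Balancing these contributions against $2p/(p+1)$ and exploiting the Fong--Swan-type behaviour in $p$-solvable groups should yield the reduction, with the equality case forcing the Frobenius structure of the extremal examples.

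\emph{Main obstacle.} As the authors note, the $p=2$ tools do not transfer smoothly. The Frobenius--Schur/real-valued machinery that powers the $p=2$ arguments has no direct odd-$p$ analog, and divisibility of $\chi(1)$ by an odd prime is substantially harder to force than evenness (for instance, the Steinberg character is unavailable when $p$ is not the defining characteristic). I expect the single hardest technical step to be producing extendible characters of degree divisible by $p$ in cross-characteristic groups of Lie type; here one would have to dig into $d$-Harish-Chandra theory and unipotent characters to find a systematic construction. A secondary obstacle is sharpness: pinning down the exact list of groups that achieve $\acd_p(G)=2p/(p+1)$ without having a normal Sylow $p$-subgroup, which is needed to close out the induction in Stage~2, will likely require a delicate case analysis of Frobenius complements and their extensions.
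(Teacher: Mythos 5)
You should be aware that the paper does not prove this statement: it is stated as Conjecture~\ref{conjecture}, and the authors explicitly say that their $p=2$ methods ``do not carry out smoothly to odd primes'' and even doubt that the bound $2p/(p+1)$ is optimal for every $p$ (e.g.\ $p=5$). So there is no proof in the paper to compare yours against, and your submission is, by its own admission, a research plan rather than a proof. Judged as such, it correctly mirrors the architecture of the $p=2$ argument (solvability first, then an inductive reduction to a coprime action $N\rtimes P$), and your identification of the extremal family --- Frobenius groups $A\rtimes C_p$ with $A$ abelian of order $p+1$, i.e.\ $\Sy_3$ and $\Al_4$ --- matches the paper's own remark following the conjecture. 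It is also worth noting that some pieces of the $p=2$ proof transfer to odd $p$ almost verbatim: the orbit-counting argument of Lemma~\ref{lemma-orbit} yields $|G:I_j|\leq \acd_p(G)/(2-\acd_p(G))<p$ for an orbit of size divisible by $p$, which is the needed contradiction, and the final step (a $p$-group whose orbits on $\Irr(N)$ all have size coprime to $p$ acts trivially) holds for any prime.

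The genuine gaps are in what you call Stage~1 and in the glue between the stages. First, the general-$p$ analogue of Theorem~\ref{theorem-extendible-characters-simplegroups} --- every nonabelian simple $S$ with $p\mid |S|$ has an irreducible character of sufficiently large degree divisible by $p$ that extends to its inertia group in $\Aut(S)$ --- is not established anywhere, and as you note the cross-characteristic Lie-type case has no available substitute for the Steinberg character or for the real/rational permutation-character machinery the paper leans on; this is the heart of the problem, not a routine adaptation. Second, even granting Stage~1, the paper's route to a normal Sylow $2$-subgroup uses full solvability (via the threshold $5/2$ tied to $\Al_5$) to force minimal normal subgroups to be abelian before Schur--Zassenhaus and Frattini apply; you would need to prove the corresponding quantitative solvability (or at least $p$-solvability) statement with a constant $c_p>2p/(p+1)$, and the arithmetic showing that the simple-group contribution pushes $\acd_p$ above $c_p$ (the analogue of Proposition~\ref{proposition-5/2}, including the exceptional treatment of the smallest simple group for each $p$) is not carried out. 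Until those two items are supplied, the statement remains open, exactly as the paper presents it.
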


\noindent The bound in Conjecture~\ref{conjecture} perhaps is not
optimal for all primes. If $C_p$ (the cyclic group of order $p$) can
act nontrivially on an abelian group of order $p+1$, then the bound
clearly can not be lower (for instance $p=3,7$, or any Mersenne
prime). But when $p=5$ for example, $C_5$ can only act trivially on
an abelian group of order $6$, and we think that the best possible
bound is not $10/6$.

Theorems \ref{theorem-main-2}, \ref{theorem-main-1}, and
\ref{theorem-main-3} are respectively proved in
Sections~\ref{section-theorem-1.2}, \ref{section-theorem-1.1}, and
\ref{section-strongly-real}.

\section{Extendibility of some characters of even degree}\label{section-preliminary}

Throughout the paper, for a finite group $G$ and a positive integer
$k$, we write $n_k(G)$ to denote the number of irreducible complex
characters of $G$ of degree $k$, and $n_{k,+}(G)$ to denote the
number of strongly real, irreducible complex characters of $G$ of
degree $k$. Furthermore, if $N$ is a normal subgroup of $G$, then
$n_k(G|N)$ denotes the number of irreducible characters of $G$ of
degree $k$ whose kernels do not contain $N$, and similarly for
$n_{k,+}(G|N)$. If $\theta$ is a character of a normal subgroup of
$G$, we write $I_G(\theta)$ to denote the stabilizer or the inertia
subgroup of $\theta$ in $G$. Other notation is standard (and follows
\cite{Isaacs1}) or will be defined when needed.

We need the following result, whose proof relies on the
classification of finite simple groups.

\begin{theorem}\label{theorem-extendible-characters-simplegroups}
Every nonabelian finite simple group $S$ has an irreducible
character $\theta$ of even degree such that $\theta(1)\geq 4$ and
$\theta$ is extendible to a strongly real character of $I_{\Aut(S)}(\theta)$. Furthermore, if
$S\ncong \Al_5$ then $\theta$ can be chosen so that $\theta(1)\geq
8$.
\end{theorem}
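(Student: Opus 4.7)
The plan is to proceed by the classification of finite simple groups, treating alternating, sporadic, and Lie-type groups in turn. In each case I must exhibit a single $\theta\in\Irr(S)$ of even degree $\geq 4$ (and $\geq 8$ unless $S\cong\Al_5$) that extends to its inertia subgroup $I_{\Aut(S)}(\theta)$ and such that this extension can be chosen to be strongly real. A convenient sufficient condition, which I will use wherever possible, is that $\theta$ is $\Aut(S)$-invariant, afforded by a real representation of $S$, and rational-valued: for then extendibility to all of $\Aut(S)$ with Frobenius--Schur indicator $+1$ can be deduced either from an explicit construction of the extension or from general extension results, and there is no compatibility obstruction.

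For $\Al_n$ with $n\geq 7$, where $\Aut(\Al_n)=\Sy_n$, I would take $\theta=\chi^\lambda|_{\Al_n}$ for a Specht character $\chi^\lambda$ of $\Sy_n$ indexed by a non-self-conjugate partition $\lambda\vdash n$ with $\chi^\lambda(1)$ even and at least $8$. Because $\lambda\neq\lambda'$, the restriction is irreducible, so $\chi^\lambda$ itself provides the required extension to $\Aut(\Al_n)$, and Specht characters are rational and afforded by real representations, hence strongly real. Existence of such a $\lambda$ follows from the hook length formula combined with a short parity check. The remaining cases $\Al_5$ and $\Al_6$ are handled by direct inspection of the character tables, the latter requiring extra care because $|\Out(\Al_6)|=4$. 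For the sporadic simple groups and the Tits group, the claim is a finite verification in the ATLAS: since $|\Out(S)|\leq 2$ and even-degree rational characters are plentiful, this step is routine.

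For Lie-type groups $S$ the natural source is the set of unipotent characters. By results of Lusztig and Malle, unipotent characters are invariant under diagonal automorphisms and (outside a short explicit list) under field and graph automorphisms, they extend to their stabilizer in $\Aut(S)$, and Lusztig's formula for the Frobenius--Schur indicator is available. I would first attempt to exhibit a unipotent $\theta$ of even degree $\geq 8$ for each family by inspecting the known generic degree polynomials in $q$: the reflection/standard characters of $\GL_n(q)$, $\U_n(q)$, $\Sp_{2n}(q)$, $\SO^\pm_{2n}(q)$, $\SO_{2n+1}(q)$ and analogous low-degree unipotent characters for the exceptional types serve as initial candidates whose parity can be controlled by an easy calculation in $q$. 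For the small-rank cases ($\PSL_2(q), \PSL_3(q), \PSU_3(q), \Sp_4(q)$, the Suzuki groups ${}^2B_2(q)$, the Ree groups ${}^2G_2(q)$, $G_2(q)$, and ${}^3D_4(q)$), where the unipotent supply is thin, I would fall back on a semisimple Lusztig series attached to a rational involution $s$ of the dual group stable under the relevant field/graph automorphisms; the resulting characters are $\Aut(S)$-invariant with explicit $p'$-index degrees, and strong reality can be verified by standard indicator formulas for semisimple characters.

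The main obstacle will be these small-rank Lie-type cases, where unipotent characters are scarce while outer automorphism groups can be comparatively large. There one must simultaneously control degree parity, extendibility to $I_{\Aut(S)}(\theta)$, and Frobenius--Schur indicator $+1$, and the argument will rely on explicit character-table information (Suzuki's table, Deriziotis--Michler, Enomoto, and generic tables produced via CHEVIE). Known exceptional isomorphisms such as $\PSL_2(4)\cong\Al_5$, $\PSL_2(9)\cong\Al_6$, and $\PSU_4(2)\cong\PSp_4(3)$ would be absorbed into the alternating or sporadic-style checks in order to avoid redundant case analysis.
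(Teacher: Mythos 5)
Your overall architecture (CFSG, then alternating/sporadic/Lie-type cases, with ATLAS checks for small groups) matches the paper's, and your alternating-group and sporadic-group steps are essentially what the paper does (it takes the restrictions of the $\Sy_n$-characters labelled by $(n-2,2)$ and $(n-2,1^2)$, exactly one of which has even degree). The genuine gap is in your ``convenient sufficient condition'': it is simply not true that an $\Aut(S)$-invariant, rational-valued, strongly real $\theta\in\Irr(S)$ extends to $I_{\Aut(S)}(\theta)$ with ``no compatibility obstruction,'' let alone to a strongly real extension. Extendibility of an invariant character can fail outright when $\Out(S)$ is non-cyclic (precisely the situation for $\PSL_n(q)$, $\PSU_n(q)$, $D_4(q)$, $\Al_6$, \dots), and even when an extension exists its field of values and Frobenius--Schur indicator need not be inherited from $\theta$ --- the paper itself records that for $S=\Al_6$ no choice of $\theta$ admits a rational-valued extension, which already refutes the ``no obstruction'' claim. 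The same problem infects your unipotent-character route: Malle's results give invariance and (in good cases) extendibility to the stabilizer, and Lusztig's indicator formula applies to the characters of $S$ or of the finite reductive group, but neither controls the indicator of the \emph{extension} to $I_{\Aut(S)}(\theta)$, which is the quantity the theorem actually requires. Your plan never supplies a mechanism that forces the extension to be strongly real.

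The paper's proof is organized around exactly such a mechanism: wherever possible it realizes $\theta$ as a multiplicity-one constituent of a permutation character of $\Aut(S)$ (rank~$3$ permutation characters for the classical groups in odd characteristic, a rank~$5$ one for $E_6(q)$ and $\tw2E_6(q)$, a suitable permutation representation for $\PSL_n(q)$ with $n\ge 4$), so that $\theta$ automatically has a rational --- hence strongly real --- extension; in characteristic $2$ it uses Feit's theorem that the Steinberg character extends to a rational representation of $\Aut(S)$; and for the residual small-rank families ($\PSL_2$, $\PSL_3$, $\PSU_3$, $G_2$, $\tw2G_2$, $\tw3D_4$, $F_4$, $D_4$, $E_7$, $E_8$) it imports explicit constructions from the Marinelli--Tiep and Tiep papers in which the strongly real extension (often living in an adjoint-type overgroup $G$ with $I_{\Aut(S)}(\theta)=G$ or $=S$) is built by hand. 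If you want to salvage your plan, you need to replace the ``sufficient condition'' paragraph with an argument of this kind --- multiplicity-one occurrence in a real representation of the stabilizer, or an explicit real model of the extension --- case by case; as written, the step from an invariant strongly real $\theta$ to a strongly real extension is asserted rather than proved, and it is the heart of the theorem.
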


\begin{proof}
(i) The cases where $S \cong \Al_n$ with $5 \leq n \leq 8$, or $S
\cong \PSL_2(q)$ with $q \leq 19$, or $S \cong \PSU_3(3)$,
$\PSp_4(3)$, $\Sp_6(2)$, $\tw2 F_4(2)'$, or $S$ is one of the $26$
sporadic finite simple groups, can be checked directly using
\cite{Atl1}. (We note that in all these cases but $S = \Al_6$, we
can always find $\theta$ so that it has a rational-valued extension
to $I_{\Aut(S)}(\theta)$.) In what follows, we may therefore assume
that $S$ is not isomorphic to any of the listed groups. In
particular, it follows from the main results of \cite{R, SZ} that
the degree of any nontrivial complex irreducible character of $S$ is
at least $8$.

\smallskip
(ii) Certainly, one can find many different choices for the desired character $\theta$. In what follows, having
in mind some other applications, we will try to construct $\theta$ in such a way that its extension to $I_{\Aut(S)}(\theta)$ is
rational-valued if possible.

Assume now that $S \cong \Al_n$ with $n \geq 9$.  Consider the
irreducible characters $\alpha, \beta \in \Irr(\Sy_n)$ labeled by
the partitions $(n-2,2)$ and $(n-2,1^2)$, of degree $n(n-3)/2$ and
$(n-1)(n-2)/2$ respectively. Since the given partitions are not
self-conjugate, $\alpha$ and $\beta$ both restrict irreducibly to
$S$. Furthermore, $\beta(1) = \alpha(1)+1 \geq 28$, and so exactly
one of $\alpha$, $\beta$ has even degree. As $\Aut(S) \cong \Sy_n$,
we are done in this case by choosing $\theta \in
\{\alpha_S,\beta_S\}$ of even degree.

Next we consider the case $S$ is a finite simple group of Lie type in characteristic $2$. As shown in
\cite{F}, the Steinberg character of $S$, of degree $|S|_2$, extends to a character of a rational representation
of $\Aut(S)$, whence we are done again.

\smallskip
(iii) From now on we may assume that $S$ is a finite simple group of Lie type, defined over a finite field
$\FF_q$ of odd characteristic $p$.

Consider the cases where $S = \PSU_n(q)$ with $n \geq 4$,
$\PSp_{2n}(q)$ with $n \geq 2$, $\Omega_{2n+1}(q)$ with $n \geq 3$,
$\mathrm{P}\Omega^{+}_{2n}(q)$ with $n \geq 5$, or
$\mathrm{P}\Omega^{-}_{2n}(q)$ with $n \geq 4$. As shown in pp.~1--4
of the proof of \cite[Theorem~2.1]{Dolfi-Navarro-Tiep}, $\Aut(S)$
has a rank $3$ permutation character $\rho = 1+\alpha+\beta$, where
the characters $\alpha$ and $\beta$ both restrict nontrivially and
irreducibly to $S$. Furthermore, exactly one of $\alpha$ and $\beta$
is of even degree (and both are afforded by rational representations). Hence we are done
by choosing $\theta \in \{\alpha_S,\beta_S\}$ of even degree.

The same argument, but applied to a rank $5$ permutation character
of $\Aut(S)$, see p.~5 of the proof of
\cite[Theorem~2.1]{Dolfi-Navarro-Tiep}, handles the cases $S =
E_6(q)$ or $\tw2 E_6(q)$.

Suppose that $S = \PSL_n(q)$ with $n \geq 4$. As shown in the proof
of \cite[Proposition~5.5]{NT1}, $\Aut(S)$ has a permutation representation, whose
character contains a rational-valued irreducible character $\gamma$ of (even) degree
$$\left\{ \begin{array}{ll}(q^n-1)(q^{n-1}-1)/(q-1)^2, & q \equiv 1 (\mod 4),\\
     (q^n-1)(q^{n-1}-1)/(q^2-1), & q \equiv 3 (\mod 4), \end{array} \right.$$
with multiplicity one
that restricts irreducibly to $S$. It follows that $\gamma$ is also afforded by
a rational representation, and we can choose $\theta = \gamma_S$.

\smallskip
(iv) In the remaining cases, our choice of $\theta$ yields a not
necessarily rational, but still admits a strongly real extension to $J := I_{\Aut(S)}(\theta)$.
First, if $S = \PSL_3(q)$, then the (unique) character $\theta \in
\Irr(S)$ of degree $q^2+q$ extends to $\Aut(S)$ by
\cite[Lemma~6.2]{Tiep}. Next, if $S = G_2(q)$, $\tw2 G_2(q)$, $\tw3
D_4(q)$, $F_4(q)$, or $E_8(q)$, then the proof of
\cite[Proposition~4.4]{Marinelli-Tiep} yields a strongly real
character $\theta \in \Irr(S)$ of even degree, with $J = S$.

If $S = \mathrm{P}\Omega^+_8(3)$ then by p.~1 of the proof of
\cite[Proposition~4.9]{Marinelli-Tiep} we can choose $\theta$ of
degree $300$.

Suppose now that $S = \PSL_2(q)$, $\PSU_3(q)$, or (a simple group of
type) $D_4(q)$ with $q \geq 5$, or $E_7(q)$. Then we can view $S$ as
the derived subgroup of a finite Lie-type group $G$ of adjoint type:
$G = \PGL_2(q)$, $\PGU_3(q)$, $D_4(q)_{{\rm ad}}$, or $E_7(q)_{{\rm
ad}}$, respectively. As shown in the proof of
\cite[Proposition~4.5]{Marinelli-Tiep} (for types $A_1$ and $E_7$),
Case IIb of the proof of \cite[Proposition~4.7]{Marinelli-Tiep} (for
$\PSU_3$), and the proof of \cite[Proposition~4.9]{Marinelli-Tiep}
(for type $D_4$), $G$ contains a strongly real character of even
degree that restricts to an irreducible character $\theta \in
\Irr(S)$ with $J = G$. So we are done in these cases as well.
\end{proof}

When a finite group $G$ has a nonabelian minimal normal subgroup
$N$, by using
Theorem~\ref{theorem-extendible-characters-simplegroups}, we can
produce an irreducible character of $N$ of even degree that is
extendible to (a strongly real character of) its stabilizer in $G$.

\begin{theorem}\label{theorem-extendible-characters-G}
Let $G$ be a finite group with a nonabelian minimal normal subgroup
$N\ncong\Al_5$. Then there exists $\varphi\in\Irr(N)$ of even degree
such that $\varphi(1)\geq 8$ and $\varphi$ is extendible to a strongly real
character of $I_G(\varphi)$.
\end{theorem}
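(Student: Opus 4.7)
The strategy is to apply Theorem~\ref{theorem-extendible-characters-simplegroups} to the simple factors of $N$ and then to assemble the extensions via the natural wreath product structure on $\Aut(N)$. Since $N$ is a nonabelian minimal normal subgroup of $G$, we can write $N = S_1 \times \cdots \times S_k$ with each $S_i \cong S$ for some nonabelian finite simple group $S$, and $\Aut(N) = \Aut(S) \wr \Sy_k$. Apply Theorem~\ref{theorem-extendible-characters-simplegroups} to $S$ to obtain $\theta \in \Irr(S)$ of even degree at least $4$ (at least $8$ if $S \ncong \Al_5$) that admits a strongly real extension $\widetilde\theta$ to $J := I_{\Aut(S)}(\theta)$. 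Define $\varphi := \theta \times \cdots \times \theta \in \Irr(N)$; then $\varphi$ has even degree $\theta(1)^k$, and the hypothesis $N \ncong \Al_5$ forces $\varphi(1) \geq 8$ (either $k = 1$ with $S \ncong \Al_5$, or $k \geq 2$ with $\theta(1)^k \geq 4^2 = 16$).

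Next, I build an extension of $\varphi$ across the full wreath product. Because all tensor factors of $\varphi$ coincide, its stabilizer in $\Aut(N) = \Aut(S) \wr \Sy_k$ equals $J \wr \Sy_k$. Realize $\widetilde\theta$ by a real representation $\rho : J \to \GL(V)$. Then $\rho^{\otimes k}$ is a real representation of $J^k$ affording $\widetilde\theta^{\otimes k}$, while $\Sy_k$ acts on $V^{\otimes k}$ over $\RR$ by permuting tensor factors. These actions combine into a real (hence strongly real) representation of $J \wr \Sy_k = J^k \rtimes \Sy_k$ on $V^{\otimes k}$, whose character $\widetilde\varphi$ restricts to $\widetilde\theta^{\otimes k}$ on the base group $J^k$, and in particular restricts to $\varphi$ on $N$ (using the identification $S \cong \Inn(S) \leq \Aut(S)$, under which $\widetilde\theta|_S = \theta$).

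Finally, I transport this extension back to $I_G(\varphi)$. Let $C := \Centralizer_G(N)$; then $C \leq I_G(\varphi)$, and the conjugation action of $G$ on $N$ induces an embedding
\[
I_G(\varphi)/C \hookrightarrow I_{\Aut(N)}(\varphi) = J \wr \Sy_k.
\]
Composing this embedding with the real representation of $J \wr \Sy_k$ constructed above yields a real representation of $I_G(\varphi)$, hence a strongly real character that extends $\varphi$, as required.

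The only delicate point is the wreath product step: one must verify both that $J \wr \Sy_k$ genuinely is the full stabilizer of $\varphi$ in $\Aut(N)$ (which uses that every tensor factor of $\varphi$ is the same $\theta$), and that the permutation action of $\Sy_k$ on $V^{\otimes k}$ is defined over $\RR$ so that strong reality is preserved when $\rho$ is. Once these are in place, the rest of the argument is bookkeeping.
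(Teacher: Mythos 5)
Your proposal is correct and follows essentially the same route as the paper: take $\varphi = \theta \times \cdots \times \theta$ for the character $\theta$ supplied by Theorem~\ref{theorem-extendible-characters-simplegroups}, identify $I_{\Aut(N)}(\varphi)$ with $I_{\Aut(S)}(\theta) \wr \Sy_k$, and extend $\varphi$ via the real action of this wreath product on $V^{\otimes k}$. The only cosmetic difference is that the paper disposes of $\bC_G(N)$ by passing to $G/\bC_G(N)$ at the outset, whereas you pull the representation back through $I_G(\varphi)/\bC_G(N) \hookrightarrow I_{\Aut(N)}(\varphi)$ at the end; both handle the point equally well.
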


\begin{proof}
Since $N$ is a nonabelian minimal normal subgroup of $G$, we have
$N\cong S\times S\times\cdots\times S$, a direct product of $k$
copies of a nonabelian simple group $S$. Replacing $G$ by
$G/\bC_G(N)$ if necessary, we may assume that $\bC_G(N)=1$. Then we
have
\[
N\unlhd G \leq \Aut(N)=\Aut(S)\wr \Sy_k.
\]
Let $\theta$ be an irreducible character of $S$ found in
Theorem~\ref{theorem-extendible-characters-simplegroups} and let
$\mathcal{O}_S$ be the orbit of $\theta$ in the action of $\Aut(S)$
on $\Irr(S)$. Consider the character
$\varphi:=\theta\times\cdots\times\theta\in\Irr(N)$. Then the orbit
of $\varphi$ under the action of $\Aut(N)$ is
\[
\mathcal{O}_N:=\{\theta_1\times\theta_2\times\cdots\times\theta_k
\in\Irr(N)\mid \theta_i\in \mathcal{O}_S\}.
\]
Clearly $\varphi$ is invariant under $I_{\Aut(S)}(\theta)\wr \Sy_k$.
On the other hand,
\[
|\Aut(N):I_{\Aut(S)}(\theta)\wr
\Sy_k|=|\Aut(S):I_{\Aut(S)}(\theta)|^k=|\mathcal{O}_S|^k=|\mathcal{O}_N|.
\]
Therefore we deduce that
$I_{\Aut(N)}(\varphi)=I_{\Aut(S)}(\theta)\wr \Sy_k$.

Assume that $\theta$ extends to a strongly real character $\alpha$ of
$J := I_{\Aut(S)}(\theta)$, say afforded by an $\RR J$-module $V$.
Then $I_{\Aut(N)}(\varphi)$ acts naturally on the $\RR$-space $V^{\otimes k}$, on which
$N$ acts with character $\varphi$. It follows that $\varphi$ is extendible to the strongly
real character of $I_G(\varphi)=G\cap I_{\Aut(N)}(\varphi)$ afforded by $V^{\otimes k}$.

Since $\theta(1)\geq 4$ in general and $\theta(1)\geq 8$ when
$S\ncong \Al_5$, we observe that $\varphi(1)=\theta(1)^k\geq 8$ as
long as $(S,k)\neq (\Al_5,1)$, and we are done.
\end{proof}

Theorem~\ref{theorem-extendible-characters-G} can be used to bound
the number of irreducible characters of degrees 1 and 2 of finite
groups with a nonabelian minimal normal subgroup, as shown in the
next proposition. This should be compared with Proposition 3.2 of
\cite{Hung}.

\begin{proposition}\label{proposition-n1-n2}
Let $G$ be a finite group with a nonabelian minimal normal subgroup
$N$. Assume that there is some $\varphi\in\Irr(N)$ such that
$\varphi$ is extendible to $I_G(\varphi)$. Let
$d:=\varphi(1)|G:I_G(\varphi)|$. Then the following hold.
\begin{enumerate}
\item[(i)] $n_1(G)\leq n_d(G)|G:I_G(\varphi)|$.
\item[(ii)] If $\varphi$ extends to a strongly real character of $I_G(\varphi)$, then
$$n_{1,+}(G)\leq n_{d,+}(G)|G:I_G(\varphi)|.$$
\item[(iii)] $n_2(G)\leq
n_{2d}(G)|G:I_G(\varphi)|+\frac{1}{2}n_d(G)|G:I_G(\varphi)|$.
\end{enumerate}
Moreover, if $\varphi$ is invariant in $G$ then $n_2(G)\leq
n_{2d}(G)$.
\end{proposition}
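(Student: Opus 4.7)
Let $T := I_G(\varphi)$ and let $\psi \in \Irr(T)$ be the extension of $\varphi$ given by hypothesis. The plan rests on two elementary observations about $N$: first, $N$ is perfect and has no nontrivial linear character, so every linear character of $G$ is trivial on $N$; second, no nonabelian finite simple group has a two-dimensional complex irreducible representation, hence neither does $N$ (a direct product of such), and together with the first observation this forces $N \leq \ker \chi$ for every $\chi \in \Irr_2(G)$. The main tool is the Clifford--Gallagher correspondence: since $\psi$ extends $\varphi$, the map $\beta \mapsto (\beta\psi)^G$ is a bijection $\Irr(T/N) \to \Irr(G \mid \varphi)$ sending a degree-$k$ character to a degree-$kd$ character.

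For (i), I would send $\lambda \in \Irr_1(G)$ to $(\lambda|_T \psi)^G \in \Irr_d(G \mid \varphi)$. By the Clifford--Gallagher bijection two images coincide exactly when $\lambda_1|_T = \lambda_2|_T$, so the fibers are cosets of the subgroup of linear characters of $G$ trivial on $T$, of order at most $|G:T|$. This yields $n_1(G) \leq n_d(G)\cdot|G:T|$. Part (ii) follows by the same construction applied to real linear characters of $G$: tensor products and inductions of real representations remain real, so the image characters are strongly real whenever $\lambda$ and $\psi$ are.

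For (iii), the second observation reduces the problem to $\chi \in \Irr_2(G/N)$, whose restriction $\chi|_T$ to $T/N$ (of degree $2$) is either irreducible (set $A$) or a sum of two linear characters (set $B$). For $\chi \in A$, Gallagher makes $(\chi|_T \psi)^G$ lie in $\Irr_{2d}(G \mid \varphi)$, and a Frobenius-reciprocity count using $(\chi|_T)^G$ (of degree $2|G:T|$) shows at most $|G:T|$ elements of $A$ share a common restriction, giving $|A| \leq n_{2d}(G)\cdot|G:T|$. The main technical step is bounding $|B|$, which I would handle by double-counting the pairs $(\chi,\mu)$ with $\chi \in B$ and $\mu \in \Irr_1(T/N)$ appearing in $\chi|_T$: for fixed $\mu$, degree bookkeeping on $\mu^G$ (of degree $|G:T|$) bounds $\sum_{\chi \in B}\langle \chi|_T,\mu\rangle \leq |G:T|/2$; each $\chi \in B$ contributes exactly two such pairs; and the relevant $\mu$'s inject into $\Irr_d(G)$ via $\mu \mapsto (\mu\psi)^G$. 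This yields $|B| \leq \tfrac{1}{2}n_d(G)\cdot|G:T|$, which combines with the bound on $|A|$ to prove (iii). Finally, when $\varphi$ is $G$-invariant we have $T = G$, the set $B$ is automatically empty, and $\chi \mapsto \chi\psi$ injects $\Irr_2(G)$ into $\Irr_{2d}(G \mid \varphi)$, establishing the moreover.
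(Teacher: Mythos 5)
Your proof is correct and follows essentially the same route as the paper's: both rest on the Clifford--Gallagher bijection $\beta\mapsto(\beta\psi)^G$ from $\Irr(T/N)$ to $\Irr(G\mid\varphi)$, the perfectness of $N$ and its lack of degree-$2$ characters, and degree bookkeeping on induced characters to bound fiber sizes by $|G:T|$ (resp.\ $|G:T|/2$). The only differences are cosmetic --- you restrict characters from $G$ down to $T$ where the paper counts in $I$ and induces up, and your double count even gives the slightly sharper bound $|B|\leq \frac{1}{4}n_d(G)|G:T|$ --- so the argument matches the paper's in substance.
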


\begin{proof} For simplicity we write $I:=I_G(\varphi)$.

\medskip

(i) First, since $n_1(G)=|G:G'|$ and $n_1(I)=|I:I'|$, we have
$n_1(G)\leq |G:I|n_1(I)$. Therefore, we wish to show that
$n_1(I)\leq n_d(G)$ where $d:=\varphi(1)|G:I|$.

As $N=N'\subseteq I'$, the normal subgroup $N$ is contained in the
kernel of every linear character of $I$ so that \[n_1(I)=n_1(I/N).\]
Recall that $\varphi\in\Irr(N)$ is extendible to $I$ and so we let
$\chi\in\Irr(I)$ be an extension of $\varphi$. Using Gallagher's
theorem and Clifford's theorem (see \cite[Corollary~6.17 and
Theorem~6.11]{Isaacs1}), we see that each linear character $\lambda$
of $I/N$ produces the irreducible character $\lambda\chi$ of $T$ of
degree $\varphi(1)$, and this character in turns produces the
irreducible character $(\lambda\chi)^G$ of $G$ of degree
$(\lambda\chi)^G(1)=\varphi(1)|G:I|=d$. As the maps $\lambda\mapsto
\lambda \chi\mapsto (\lambda\chi)^G$ are both injective, it follows
that
\[n_1(I/N)\leq n_{d}(G)\] and we therefore have
$n_1(I)\leq n_d(G)$, as desired.

\medskip
(ii) For any group $X$, let $X^*$ denote the subgroup generated by
all $x^2$, $x \in X$. Then note that $n_{1,+}(G) = |G:G^*|$,
$n_{1,+}(I) = |I:I^*|$, and $I^* \leq G^*$, whence $n_{1,+}(G)\leq
|G:I|n_{1,+}(I)$. Furthermore, for any $\chi \in \Irr(G)$ and any
strongly real linear character $\lambda$ of $G$, $\lambda^2 = 1_G$
and so $\chi$ and $\chi\lambda$ have the same Frobenius-Schur
indicator. In particular, $\chi\lambda$ is strongly real if and only
if $\chi$ is. Furthermore, if $\rho$ is a strongly real character of
a subgroup $T \leq G$, then so is the induced character $\rho^G$.
Now we can argue as in (i) to complete the proof.

\medskip
(iii) We first claim that
\[
n_2(G)\leq n_2(I)|G:I|+\frac{1}{2}n_1(I)|G:I|.
\]
Let $\chi\in\Irr(G)$ with $\chi(1)=2$. Take $\phi$ to be an
irreducible constituent of $\chi\downarrow_I$. Frobenius reciprocity
then implies that $\chi$ in turn is an irreducible constituent of
$\phi^G$. If $\phi(1)=2$ then as $\phi^G(1)=2|G:I|$, there are at
most $|G:I|$ irreducible constituents of degree $2$ of $\phi^G$. We
deduce that there are at most $n_2(I)|G:I|$ irreducible characters
of degree $2$ of $G$ that arise as constituents of $\phi^G$ with
$\phi(1)=2$. On the other hand, if $\phi(1)=1$ then, as
$\phi^G(1)=|G:I|$, there are at most $|G:I|/2$ irreducible
constituents of degree $2$ of $\phi^G$. As above, we deduce that
there are at most $n_1(I)|G:I|/2$ irreducible characters of degree
$2$ of $G$ that arise as constituents of $\phi^G$ with $\phi(1)=1$.
So the claim is proved.

Since we have already proved in (i) that $n_1(I)=n_1(I/N)\leq
n_d(G)$, to prove Proposition \ref{proposition-n1-n2}(iii) it suffices to show that $n_2(I)\leq
n_{2d}(G)$.

We claim that \[n_2(I)=n_2(I/N)\] or in other words, $N$ is
contained in the kernel of every irreducible character of degree $2$
of $I$. Let $\phi\in\Irr(I)$ with $\phi(1)=2$. Since $N$ has no
irreducible character of degree 2 and has only one linear character,
which is the trivial one, it follows that $\phi_N=2\cdot1_N$. We
then have $N\subseteq \Ker(\phi)$, as claimed.

Recall that $\chi\in\Irr(I)$ is an extension of $\psi$. Using
Gallagher's theorem and Clifford's theorem again, we obtain that
each irreducible character $\mu\in\Irr(I/N)$ of degree $2$ produces
the character $(\mu\chi)^G\in\Irr(G)$ of degree
$(\mu\chi)^G(1)=2\psi(1)|G:I|=2d$. It follows that
\[n_2(I/N)\leq n_{2d}(G),\] and thus $n_2(I)\leq n_{2d}(G)$, as desired.

If $\varphi$ is invariant in $G$ then $G=I$,  yielding
immediately that $n_2(G)\leq n_{2d}(G)$.
\end{proof}


\section{Solvability -
Theorem~\ref{theorem-main-2}}\label{section-theorem-1.2}

In this section, we use the results in
Section~\ref{section-preliminary} to prove
Theorem~\ref{theorem-main-2}. The next proposition handles an
important case of this theorem.

\begin{proposition}\label{proposition-5/2}
Let $G$ be a finite group with a nonabelian minimal normal subgroup.
Then $\acd_2(G)\geq 5/2$.
\end{proposition}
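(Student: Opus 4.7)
The plan is to translate the bound $\acd_2(G)\geq 5/2$ into a weighted character count. Writing $E$ for the set of irreducible characters of $G$ of even degree and $n_1:=n_1(G)$, the inequality $\acd_2(G)\geq 5/2$ is equivalent to
\[
\sum_{\chi\in E}\bigl(2\chi(1)-5\bigr)\geq 3n_1,
\]
in which a degree-$2$ character contributes $-1$ while every even degree $\geq 4$ contributes at least $3$. The task thus reduces to exhibiting enough even-degree characters of degree at least $4$ to outweigh both the target $3n_1$ and any degree-$2$ characters.

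Next I would invoke Theorem~\ref{theorem-extendible-characters-G} to produce $\varphi\in\Irr(N)$ of even degree extendible to $I:=I_G(\varphi)$; set $k:=|G:I|$ and $d:=k\varphi(1)$. Proposition~\ref{proposition-n1-n2}(i) yields $n_d(G)\geq n_1(I)\geq n_1/k$ irreducible characters of $G$ of (even) degree $d$, while Proposition~\ref{proposition-n1-n2}(iii) provides the complementary bound
\[
n_2(G)\leq k\,n_{2d}(G)+\tfrac{k}{2}n_d(G).
\]
Discarding all even degrees outside $\{2,d,2d\}$ (whose contribution is nonnegative) then gives
\[
\sum_{\chi\in E}\bigl(2\chi(1)-5\bigr)\geq n_d(G)\bigl(2d-5-\tfrac{k}{2}\bigr)+n_{2d}(G)\bigl(4d-5-k\bigr).
\]

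When $N\ncong\Al_5$, Theorem~\ref{theorem-extendible-characters-G} gives $\varphi(1)\geq 8$, hence $d\geq 8k$; both bracketed coefficients are comfortably positive, and keeping only the first summand leaves at least $(n_1/k)(31k/2-5)\geq (21/2)n_1$, well above $3n_1$ for every $k\geq 1$.

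The delicate case is $N\cong\Al_5$, where the inequality is sharp. Here $\varphi$ must be taken to be the unique irreducible character of $\Al_5$ of degree $4$, which is $\Aut(\Al_5)$-invariant, so $k=1$, $d=4$, and the \emph{moreover} clause of Proposition~\ref{proposition-n1-n2}(iii) sharpens to $n_2(G)\leq n_8(G)$. Then
\[
\sum_{\chi\in E}\bigl(2\chi(1)-5\bigr)\geq -n_2(G)+3n_4(G)+11n_8(G)\geq 3n_1+10n_8(G)\geq 3n_1,
\]
where I used $n_4(G)\geq n_1$ from Proposition~\ref{proposition-n1-n2}(i). Equality forces $n_8(G)=0$ and $n_4(G)=n_1(G)=1$, consistent with $G=\Al_5$. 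This sharpness is the main obstacle: there is no slack, so it is essential both that $\varphi$ be $G$-invariant (to ensure, via Gallagher's theorem, that the linear characters of $I/N$ correspond bijectively to distinct degree-$d$ characters of $G$) and that the stronger form of Proposition~\ref{proposition-n1-n2}(iii) be applicable.
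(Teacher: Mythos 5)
Your proposal is correct and follows essentially the same route as the paper: the same reformulation of $\acd_2(G)\geq 5/2$ as $\sum_{2\mid k}(2k-5)n_k(G)\geq 3n_1(G)$, the same split into $N\ncong\Al_5$ (using $\varphi(1)\geq 8$ from Theorem~\ref{theorem-extendible-characters-G} together with Proposition~\ref{proposition-n1-n2}(i),(iii)) and $N\cong\Al_5$ (using the $G$-invariant degree-$4$ character and the \emph{moreover} clause $n_2(G)\leq n_8(G)$). The numerical estimates all check out, so no changes are needed.
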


\begin{proof}
Let $N$ be a nonabelian minimal normal subgroup of $G$. First we
assume that $N\ncong \Al_5$. Then
Theorem~\ref{theorem-extendible-characters-G} guarantees that there
is $\varphi\in\Irr(N)$ of even degree such that $\varphi(1)\geq 8$
and $\varphi$ is extendible to the inertia subgroup $I_G(\varphi)$.
Using Proposition~\ref{proposition-n1-n2}, we then have
\[
n_1(G)\leq n_d(G)|G:I_G(\varphi)|
\]
and
\[n_2(G)\leq
n_{2d}(G)|G:I_G(\varphi)|+\frac{1}{2}n_d(G)|G:I_G(\varphi)|,
\]
where
$d:=\varphi(1)|G:I_G(\varphi)|$. It follows that
\[
3n_1(G)+n_2(G)\leq
\frac{7}{2}n_d(G)|G:I_G(\varphi)|+n_{2d}(G)|G:I_G(\varphi)|.
\]
Since $\varphi(1)\geq 8$, we have $d\geq 8|G:I_G(\varphi)|\geq 8$
and hence we can check that $(7/2)|G:I_G(\varphi)|\leq (7/16)d<
2d-5$ and $|G:I_G(\varphi)|\leq d/8<4d-5$. It follows that
\[
3n_1(G)+n_2(G)<\sum_{2|k\geq 4}(2k-5)n_k(G),
\]
and thus \[\sum_{2|k \text{ or } k=1}(2k-5)n_k(G)>0.\] Therefore
\[2\sum_{2|k \text{ or } k=1} kn_k(G)> 5\sum_{2|k \text{ or }
k=1}n_k(G),\] and we have $\acd_2(G)>5/2$, as desired.

So it remains to consider $N\cong \Al_5$. Then $N$ has an
irreducible character $\varphi$ of degree $4$ that is extendible to
$\Aut(N)$ (see \cite[p.~2]{Atl1}), and hence extendible to $G$ as
well. It follows from Proposition~\ref{proposition-n1-n2} that
$n_1(G)\leq n_4(G)$ and $n_2(G)\leq n_{8}(G)$. Thus
\[
3n_1(G)+n_2(G)\leq 3n_4(G)+n_8(G)\leq \sum_{2|k\geq 4}(2k-5)n_k(G).
\]
Again this yields $\acd_2(G)\geq5/2$ and the proof is completed.
\end{proof}

We are now ready to prove Theorem~\ref{theorem-main-2}, which we
restate below.

\begin{theorem}\label{theorem-main-2-again}
Let $G$ be a finite group. If $\acd_2(G)<5/2$ then $G$ is solvable.
\end{theorem}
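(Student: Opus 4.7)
The plan is to argue by contradiction. I would take $G$ to be a counterexample of minimal order, so $G$ is nonsolvable with $\acd_2(G) < 5/2$, while the theorem holds for every group of smaller order. If $G$ had a nonabelian minimal normal subgroup, then Proposition~\ref{proposition-5/2} would give $\acd_2(G) \geq 5/2$, contradicting the hypothesis; hence every minimal normal subgroup of $G$ must be an elementary abelian $p$-group. Fix such an $N$.

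Next, since $N$ is abelian and $G$ is nonsolvable, the quotient $G/N$ is nonsolvable, so minimality of $G$ forces $\acd_2(G/N) \geq 5/2$. Identifying $\Irr(G/N)$ with its image in $\Irr(G)$ via inflation, I would decompose
\[
\Irr_2(G) \;=\; \Irr_2(G/N) \;\sqcup\; \Irr_2(G|N),
\]
where $\Irr_2(G|N)$ denotes the subset of $\Irr_2(G)$ consisting of those characters whose kernels do not contain $N$. Since the inequality $\acd_2(G) \geq 5/2$ is equivalent to $\sum_{\chi \in \Irr_2(G)}(2\chi(1) - 5) \geq 0$, and the contribution from $\Irr_2(G/N)$ is already nonnegative by the inductive hypothesis, the entire problem reduces to showing that the characters in $\Irr_2(G|N)$ have average degree at least $5/2$, which would produce the required contradiction.

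The analysis of $\Irr_2(G|N)$ then splits into two cases according to whether $N \leq G'$ or $N \cap G' = 1$, the only two possibilities since $N \cap G'$ is normal in $G$ and contained in the minimal normal subgroup $N$. When $N \leq G'$, every linear character of $G$ is trivial on $N$, so $\Irr_2(G|N)$ consists only of characters of even degree $\geq 2$; moreover, every nontrivial $\lambda \in \Irr(N)$ must satisfy $I_G(\lambda) < G$ (otherwise $[G,N] = 1$, collapsing to the other case), and Clifford theory gives $\chi(1) = \psi(1)|G \colon I_G(\lambda)|$ for the corresponding characters, from which one expects most degrees to exceed $2$ and yield a positive sum. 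The harder case is $N \cap G' = 1$: here $N \leq Z(G)$, and the linear characters of $G$ nontrivial on $N$ each contribute $-3$ to $\sum(2\chi(1)-5)$.

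Controlling these negative contributions is the main obstacle. One approach is to replace $N$ by a minimal normal subgroup lying inside $G'$ whenever such a subgroup exists; otherwise, I would exploit the resulting near-direct-product structure of $G$ (with the central $N$ complementing $G'$) to compute $\acd_2(G)$ from the character data of a perfect subgroup together with that of $N$, using the inductive surplus in $\acd_2(G/N) \geq 5/2$ to absorb the linear deficit. Extra care will be required here when $G/N$ has a nontrivial Schur cover and the characters of $G$ over nontrivial $\lambda \in \Irr(N)$ arise from genuinely projective representations of $G/N$, so that the combinatorial bookkeeping of degrees must be matched precisely against the contributions from $\Irr_2(G/N)$.
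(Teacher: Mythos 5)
Your opening moves coincide with the paper's: minimal counterexample, Proposition~\ref{proposition-5/2} to force every minimal normal subgroup to be elementary abelian, and minimality applied to the nonsolvable quotient $G/N$ to get $\acd_2(G/N)\geq 5/2$. The reduction to showing that the characters in $\Irr_2(G|N)$ have average degree at least $5/2$ is a legitimate sufficient condition. But the case analysis meant to establish it contains a genuine error, and the error sits exactly on the hard case. You assert that if $N\leq G'$ and some nontrivial $\lambda\in\Irr(N)$ is $G$-invariant, then $[G,N]=1$ and one ``collapses to the other case'' $N\cap G'=1$. The first implication is fine (if $\lambda$ is $G$-invariant then $\Ker\lambda\unlhd G$, so $\Ker\lambda=1$ by minimality of $N$, whence $N$ is cyclic of prime order and central). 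But $N\leq \bZ(G)$ does \emph{not} imply $N\cap G'=1$: one can have $N\leq \bZ(G)\cap G'$, as happens for $N=\bZ(\SL_2(5))$ inside $\SL_2(5)$ or any central product $\SL_2(5)\ast C$. In that configuration every nontrivial $\lambda$ is $G$-invariant, the characters of $G$ over $\lambda$ come from projective representations of $G/N$ and can have degree $2$, and your Clifford-theoretic estimate (``most degrees exceed $2$'') gives nothing. Since $\Al_5$ witnesses the sharpness of the bound $5/2$, this $\SL_2(5)$-type case is unavoidable and is in fact the entire content of the proof. (Conversely, the subcase $N\cap G'=1$ where you anticipate Schur-cover difficulties is actually the easy one: there $N$ embeds in $G/G'$, so $\lambda$ extends to a linear character of $G$, Gallagher gives a degree-preserving bijection $\Irr(G|\lambda)\leftrightarrow\Irr(G/N)$, and the inductive bound on $\acd_2(G/N)$ transfers directly.)

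The paper closes the gap you leave open as follows. The minimal normal subgroup $N$ is not arbitrary: it is chosen inside a minimal nonsolvable normal subgroup $H$ of $G$, and inside $[H,\bO_\infty(H)]$ when that is nontrivial. The comparison $\acd_2(G)<5/2\leq\acd_2(G/N)$ together with $n_1(G)=n_1(G/N)$ forces the existence of $\chi\in\Irr(G)$ with $\chi(1)=2$ and $N\not\subseteq\Ker\chi$; the Isaacs--Loukaki--Moret\'o theorem then identifies $G$ as a central product $\SL_2(5)\,C$ amalgamated over $N=\bZ(\SL_2(5))$ of order $2$; and explicit estimates of $n_2(G)$, $n_4(G)$, $n_6(G)$ in terms of $n_1(G)$ and $n_2(C/N)$ (following Moret\'o--Nguyen) yield $\sum_{2|k\geq 4}(2k-5)n_k(G)>3n_1(G)+n_2(G)$, the desired contradiction. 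None of this machinery appears in your outline, and without it (or a substitute handling $N\leq\bZ(G)\cap G'$) the proposal does not prove the theorem.
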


\begin{proof}
Assume, to the contrary, that the theorem is false, and let $G$ be a
minimal counterexample. In particular $\acd_2(G)<5/2$ and $G$ is
nonsolvable. Let $H\trianglelefteq G$ be minimal such that $H$ is
nonsolvable. Then clearly $H$ is perfect and contained in the last
term of the derived series of $G$. Choose a minimal normal subgroup
$N$ of $G$ such that $N\subseteq H$, and when $[H,\bO_\infty(H)]\neq
1$ we choose $N\subseteq [H,\bO_\infty(H)]$, where $\bO_\infty(H)$
denotes the largest normal solvable subgroup of $H$.

In view of Proposition~\ref{proposition-5/2}, we can assume that $N$
is abelian. It follows that the quotient $G/N$ is nonsolvable since
$G$ is nonsolvable. By the minimality of $G$, we must have
$\acd_2(G/N)\geq 5/2$. So
\[\acd_2(G)<5/2\leq\acd_2(G/N).\]
Since $n_k(G/N)\leq n_k(G)$ for every positive integer $k$ and
$n_1(G/N)=n_1(G)$ as $N\subseteq G'$, it follows that
$n_2(G/N)<n_2(G)$ and thus there exists $\chi\in\Irr(G)$ such that
$\chi(1)=2$ and $N\nsubseteq \Ker(\chi)$.

It then has been shown in the proof of
\cite[Theorem~2.2]{Isaacs-Loukaki-Moreto} that $G=LC$ is a central
product with the amalgamated subgroup $N=\Center(L)$ of order 2,
where \[L=\SL_2(5) \text{ and }
C/\Ker(\chi):=\Center(G/\Ker(\chi)).\] Since $G=LC$ is a central
product with the central amalgamated subgroup $N$, there is a
bijection $(\alpha,\beta)\mapsto \tau$ from
$\Irr(L|N)\times\Irr(C|N)$ to $\Irr(G|N)$ such that
$\tau(1)=\alpha(1)\beta(1)$. If $(\alpha,\beta)\mapsto \chi$ under
the above bijection, we must have $\beta(1)=1$ since
$L\cong\SL(2,5)$ and there are only three possibilities for
$\alpha(1)$, namely 2, 4, and 6. So $\beta\in\Irr(C|N)$ is an
extension of the unique non-principal linear character of $N$. Using
Gallagher's theorem, we then have a degree-preserving bijection from
$\Irr(C/N)$ to $\Irr(C|N)$. In particular, $n_1(C|N)=n_1(C/N).$
Since $G/L\cong C/N$ and $L\subseteq G'$, we have
\[n_1(C|N)=n_1(C/N)=n_1(G).\]

Employing the arguments in the proof of
\cite[Theorem~B]{Moreto-Nguyen}, we can evaluate and estimate
$n_2(G)$, $n_4(G)$, and $n_6(G)$ in terms of $n_1(G)$ as follows:
\begin{enumerate}
\item[(i)] $n_2(G)=2n_1(G)+n_2(C/N)$,
\item[(ii)] $n_4(G)\geq 2n_1(G)$, and
\item[(iii)] $n_6(G)\geq n_1(G)+2n_2(C/N)$.
\end{enumerate}
Now putting all things together, we have
\begin{align*}\sum_{2|k\geq 4}(2k-5)n_k(G)&\geq 3n_4(G)+7n_6(G)\\
&\geq 6n_1(G)+7(n_1(G)+2n_2(C/N)\\
&=13n_1(G)+14n_2(C/N)\\
&>3n_1(G)+n_2(G).
\end{align*}
It then follows that $\acd_2(G)>5/2$ and this is a contradiction.
\end{proof}


\section{Normal Sylow $2$-subgroups -
Theorem~\ref{theorem-main-1}}\label{section-theorem-1.1}

The next lemma is crucial in the proof of
Theorem~\ref{theorem-main-1}.

\begin{lemma}\label{lemma-orbit}
Let $G=N\rtimes M$ where $N$ is an abelian group. Assume that no
nonprincipal irreducible character of $N$ is invariant under $M$. If
$\acd_2(G)<4/3$ then there is no orbit of even size in the action of
$M$ on the set of irreducible characters of $N$.
\end{lemma}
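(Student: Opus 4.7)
My plan is to argue the contrapositive: I shall show that if there is an $M$-orbit of even size on $\Irr(N)$, then $\acd_2(G) \ge 4/3$. The first preparatory step is to pin down $n_1(G)$ exactly. Any linear character $\chi$ of $G$ restricts to a linear character $\chi_N$ of $N$ that is automatically $G$-invariant, hence $M$-invariant; the hypothesis on $M$-invariance therefore forces $\chi_N = 1_N$, so $\chi$ factors through $G/N \cong M$ and $n_1(G) = n_1(M) = [M:M']$.

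Next I will apply Clifford--Gallagher theory to an even orbit. Choose an orbit $\mathcal{O} \subseteq \Irr(N) \setminus \{1_N\}$ of size $2m$ with $m \ge 1$, a representative $\lambda \in \mathcal{O}$, and set $H := M_\lambda \le M$, so $[M:H] = 2m$ and $I_G(\lambda) = N \rtimes H$. Because $\lambda$ is linear and fixed by $H$, it extends to $I_G(\lambda)$ by declaring $H$ to act trivially. Gallagher's theorem together with the Clifford correspondence then yields a bijection between $\Irr(H)$ and the irreducible characters of $G$ lying above $\mathcal{O}$, under which $\mu \in \Irr(H)$ corresponds to a character of $G$ of degree $\mu(1)\cdot 2m$. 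In particular, the $n_1(H) = [H:H']$ linear characters of $H$ produce $[H:H']$ irreducible characters of $G$ all of the (even) degree $2m$.

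Finally I will reformulate the hypothesis $\acd_2(G) < 4/3$ as
\[
\sum_{\chi \in \Irr_2(G),\, \chi(1) \ge 2}\bigl(3\chi(1)-4\bigr) < n_1(G),
\]
and show that just the $[H:H']$ characters of degree $2m$ above $\mathcal{O}$ already force the reverse inequality. Their contribution to the left-hand side is $(6m-4)[H:H']$, and using $H' \le M'$ (so that $[H:H'] \ge |H|/|M'|$) I obtain
\[
(6m-4)[H:H'] \;\ge\; \frac{6m-4}{2m}\,[M:M'] \;=\; \Bigl(3 - \tfrac{2}{m}\Bigr)[M:M'] \;\ge\; [M:M'] \;=\; n_1(G),
\]
the last step using $m \ge 1$. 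Since every other even-degree irreducible character of $G$ contributes a non-negative term $3\chi(1)-4 \ge 2$ to the sum, this contradicts $\acd_2(G) < 4/3$. The argument involves no serious obstacle beyond correctly setting up the Clifford--Gallagher bijection and observing that the single inclusion $H' \le M'$ is all that is needed. The inequality is tight precisely when $m=1$ and $H'=M'$, matching the fact that $\Sy_3$ attains $\acd_2 = 4/3$.
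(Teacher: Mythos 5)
Your proof is correct and follows essentially the same route as the paper's: extend the linear character $\lambda$ to its inertia group using the splitting, apply Gallagher and Clifford to produce $[H:H']$ irreducible characters of $G$ of even degree $2m=[M:H]$, and play these off against $n_1(G)=[M:M']\leq [M:H][H:H']$ to contradict $\acd_2(G)<4/3$. The only difference is organizational — the paper runs a global accounting over all orbits before isolating one of even size, whereas you go straight to a single even orbit via the reformulation $\sum_{\chi}(3\chi(1)-4)<0$ — but the key inequalities are the same.
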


\begin{proof}
Let $\{1_N=\alpha_0,\alpha_1,\ldots,\alpha_f\}$ be a set of
representatives of the action of $M$ on $\Irr(N)$. For each $1\leq
i\leq f$, let $I_i$ be the inertia subgroup of $\alpha_i$ in $G$.
Since  no nonprincipal irreducible character of $N$ is invariant
under $M$, we observe that every $I_i$ is a proper subgroup of $G$.

Assume, to the contrary, that there is some orbit of even size in
the action of $M$ on $\Irr(N)$. Then there exists some index $1\leq
i\leq f$ such that $|G:I_i|$ is even. For $0\leq i\leq f$, set
$$n_{i,1}:=n_1(I_i/N),~~n_{i,\text{even}}:=\sum_{2|k} n_{k}(I_i/N),~~s_{i, \text{even}}:=\sum_{\lambda\in\Irr(I_i/N),~2|\lambda(1)} \lambda(1).$$
Since $G$ splits over $N$, it is clear that every $I_i$ also splits
over $N$. It follows that $\alpha_i$ extends to a linear character,
say $\beta_i$, of $I_i$ as $\alpha_i$ is linear. Gallagher's theorem
then implies that the mapping $\lambda\mapsto \lambda\beta_i$ is a
bijection from $\Irr(I_i/N)$ to the set of irreducible characters of
$I_i$ lying above $\alpha_i$. Using Clifford's theorem, we then
obtain a bijection $\lambda\mapsto (\lambda\beta_i)^G$ from
$\Irr(I_i/N)$ to the set of irreducible characters of $G$ lying
above $\alpha_i$. We note that
$(\lambda\beta_i)^G(1)=|G:I_i|\lambda(1)$ and hence
$(\lambda\beta_i)^G(1)$ is even if and only if either $|G:I_i|$ is
even or $\lambda(1)$ is even.

We have
\begin{align*}
\sum_{\chi\in\Irr(G), \chi(1)=1 \text{ or even}}
\chi(1)&=n_1(G/N)+\sum_{|G:I_i| \text{ even}}
|G:I_i|n_{i,1}+\sum_{i=0}^f |G:I_i|s_{i,\text{even}}\\
&\geq n_1(G/N)+\sum_{|G:I_i| \text{ even}}
|G:I_i|n_{i,1}+2\sum_{i=0}^f |G:I_i|n_{i,\text{even}}.
\end{align*}
On the other hand,
\[
\sum_{\chi\in\Irr(G), \chi(1)=1 \text{ or even}}
\chi(1)=\acd_2(G)\left(n_1(G/N)+\sum_{|G:I_i| \text{ even}}
n_{i,1}+\sum_{i=0}^f n_{i,\text{even}}\right).
\]
Therefore, we deduce that
\begin{align*}
\sum_{|G:I_i| \text{ even}} (|G:I_i|-\acd_2(G))n_{i,1}&+\sum_{i=0}^f
(2|G:I_i|-\acd_2(G))n_{i,\text{even}}\\
&\leq (\acd_2(G)-1)n_1(G/N).
\end{align*}
Since $\acd_2(G)<4/3$ and $|G:I_i|\geq1$ for every $0\leq i\leq f$,
it follows that
\[
\sum_{|G:I_i| \text{ even}} (|G:I_i|-\acd_2(G))n_{i,1}\leq
(\acd_2(G)-1)n_1(G/N)
\]
and hence
\[
(|G:I_j|-\acd_2(G))n_{j,1}\leq (\acd_2(G)-1)n_1(G/N)
\]
for some $1\leq j\leq f$ such that $|G:I_j|$ is even.

Observe that $n_1(G/N)=|(G/N):(G/N)'|$ and
$n_{j,1}=n_1(I_j/N)=|(I_j/N):(I_j/N)'|$. Therefore $n_1(G/N)\leq
|G:I_j|n_{j,1}$. It follows from the above inequality that
\[
(|G:I_j|-\acd_2(G))n_{j,1}\leq (\acd_2(G)-1)|G:I_j|n_{j,1},
\]
and thus
\[
|G:I_j|\leq \frac{\acd_2(G)}{2-\acd_2(G)}.
\]
This is impossible since $|G:I_j|\geq 2$ and $\acd_2(G)<4/3$, and
the proof is complete.
\end{proof}

We now prove the main Theorem~\ref{theorem-main-1}, which is
restated below.

\begin{theorem}\label{theorem-main-1-again}
Let $G$ be a finite group. If $\acd_2(G)<4/3$ then $G$ has a normal
Sylow $2$-subgroup.
\end{theorem}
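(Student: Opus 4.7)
The plan is to argue by contradiction: let $G$ be a counterexample of minimal order, so $\acd_2(G) < 4/3$ and a Sylow $2$-subgroup $P$ of $G$ is not normal. By Theorem~\ref{theorem-main-2} (since $4/3 < 5/2$), $G$ is solvable. The strategy is to locate a minimal normal subgroup $N$ of $G$ of odd order, write $G = N \rtimes M$ via Schur--Zassenhaus, use Lemma~\ref{lemma-orbit} to force $[P,N]=1$, and then apply minimality of $G$ to $G/N$ together with a character-counting argument to force $P \trianglelefteq G$.

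The first task is to exclude the possibility that every minimal normal subgroup of $G$ is a $2$-group. If $N \leq \bO_2(G)$ is minimal normal, then $N \leq P$ and the quotient $G/N$ still has no normal Sylow $2$-subgroup; by minimality of $G$, $\acd_2(G/N) \geq 4/3$. In the main subcase $N \leq G'$, no linear character of $G$ is nontrivial on $N$, so every ``extra'' character $\chi \in \Irr_2(G)\setminus\Irr_2(G/N)$ has degree $\geq 2$. A short comparison of the two averages $(A+A')/(B+B')$ and $A/B$ then forces the set of extras to be empty, giving $\acd_2(G) = \acd_2(G/N) \geq 4/3$, a contradiction. The auxiliary subcase $N \not\leq G'$ forces $N \cap G' = 1$ (by minimality of $N$) and $NG' = N \times G'$, and the image of $G$ in $\Aut(N)$ embeds in the cyclic odd-order group $\FF_{2^k}^*$; this structural information is then used to derive a similar contradiction.

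Given a minimal normal $N$ of odd prime-power order, Schur--Zassenhaus yields $G = N \rtimes M$ with $P \leq M$. If $N \leq \bZ(G)$ then $G = N \times M$, so $\acd_2(G) = \acd_2(M)$; minimality applied to $M$ gives $\acd_2(M) \geq 4/3$, a contradiction. Otherwise $M$ acts irreducibly and nontrivially on $N$ and dually on $\Irr(N)$, so no nonprincipal $\lambda \in \Irr(N)$ is $M$-invariant, and Lemma~\ref{lemma-orbit} shows that every $M$-orbit on $\Irr(N)$ has odd size. The $P$-orbits on $\Irr(N)$ are then $2$-powers dividing odd numbers, hence trivial, so $P$ fixes every $\lambda$ and $[P,N]=1$.

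To finish: if $G/N$ had a normal Sylow $2$-subgroup, then $PN = P \times N$ would be normal in $G$, and $P$ (characteristic in $PN$) would be normal in $G$, a contradiction; so $\acd_2(G/N) \geq 4/3$ by minimality. Each extra $\chi \in \Irr_2(G) \setminus \Irr_2(G/N)$ lies above some nontrivial orbit $O_i$ with $|O_i| \geq 3$ odd, and by Clifford--Gallagher has degree $|O_i|\mu(1)$ for some $\mu \in \Irr(I_M(\lambda_i))$ with $\mu(1)$ even, giving $\chi(1) \geq 6$. Repeating the earlier comparison of averages yields simultaneously $A'/B' \geq 6$ and $A'/B' < 4/3$ whenever $B' \geq 1$, so no even-degree extras exist. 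Hence each $I_M(\lambda_i)$ with $i \geq 1$ has no even-degree irreducible character, and by It\^{o}--Michler it has a normal Sylow $2$-subgroup, which must be $P$. A conjugation argument --- writing any nonprincipal $\lambda = m\lambda_i$ and noting both $P$ and $mPm^{-1}$ are the unique Sylow $2$ of $I_M(\lambda) = mI_M(\lambda_i)m^{-1}$ --- forces $m \in \bN_M(P)$ for every $m \in M$, so $P \trianglelefteq M$; combined with $[P,N]=1$, this yields $P \trianglelefteq G$, the desired contradiction. The main obstacle in this scheme is the first reduction, and in particular the auxiliary subcase $N \not\leq G'$.
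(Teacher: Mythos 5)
Your overall strategy (solvability via Theorem~\ref{theorem-main-2}, then Lemma~\ref{lemma-orbit} to show that $P$ centralizes an odd-order minimal normal subgroup) is the paper's, but the execution has genuine gaps, all traceable to not choosing $N$ inside $G'$ at the outset. First, the step ``Schur--Zassenhaus yields $G = N \rtimes M$ with $P \le M$'' is invalid: a minimal normal subgroup of odd prime-power order $q^n$ is not in general a Hall subgroup of $G$ (the index $|G:N|$ may be divisible by $q$), so Schur--Zassenhaus gives no complement; for instance the subgroup of order $3$ in the dihedral group of order $18$ is minimal normal and uncomplemented. The paper reaches the decomposition $G = N \rtimes \bN_G(P)$ differently: choosing $N \le G'$ forces $\acd_2(G/N) \le \acd_2(G) < 4/3$, induction gives $NP \unlhd G$, Schur--Zassenhaus is applied to $NP$ (where $N$ \emph{is} a normal Hall subgroup), and the Frattini argument plus minimality of $N$ produce the complement $\bN_G(P)$. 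Your scheme cannot borrow this, because you deduce that $G/N$ has \emph{no} normal Sylow $2$-subgroup, so $NP$ is not normal and the Frattini argument never starts.

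Second, even granting $G = N \rtimes M$ with $P \le M$, the inference ``all $M$-orbits on $\Irr(N)$ are odd, hence all $P$-orbits are trivial'' requires $P \unlhd M$, so that $|P : I_P(\lambda)| = |PI_M(\lambda) : I_M(\lambda)|$ divides the odd number $|M : I_M(\lambda)|$. Without normality, a $2$-power $P$-orbit can sit inside an odd $M$-orbit (take $\Sy_3$ acting on three points and $P$ generated by a transposition). In the paper $M = \bN_G(P)$, so $P \unlhd M$ and the step is sound; in your setup $M \cong G/N$ has no normal Sylow $2$-subgroup, so exactly this hypothesis fails. Finally, the ``auxiliary subcase $N \not\le G'$'' you flag is indeed unresolved as written: if $N \cap G' = 1$ then $[N,G] \le N \cap G' = 1$, so $N$ is a \emph{central} subgroup of order $2$ rather than one acted on through $\FF_{2^k}^{*}$; one can then finish because the nontrivial $\nu \in \Irr(N)$ extends to a linear character of $G$ and Gallagher gives a degree-preserving bijection between $\Irr(G \mid \nu)$ and $\Irr(G/N)$, whence $\acd_2(G) = \acd_2(G/N)$. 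But this whole reduction is unnecessary: since $G$ is nonabelian you may take $N \le G'$ from the start, and if that $N$ is a $2$-group the preimage of the normal Sylow $2$-subgroup of $G/N$ already works. Your endgame via It\^o--Michler applied to the inertia subgroups $I_M(\lambda_i)$ is correct and clever, but once $NP \unlhd G$ and $[P,N]=1$ are in hand, $P$ is characteristic in $NP = N \times P$ and the proof ends immediately.
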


\begin{proof}
We will argue by induction on the order of $G$. We have
$\acd_2(G)<4/3$ and therefore by Theorem~\ref{theorem-main-2}, $G$
is solvable. If $G$ is abelian then there is nothing to prove, so we
assume that $G$ is nonabelian. We then choose a minimal normal
subgroup $N$ of $G$ such that $N\subseteq G'$. As $G$ is solvable,
we have that $N$ is abelian.

Since $N\subseteq G'$, if an irreducible character of $G$ has kernel
not containing $N$, its degree must be at least 2. We therefore
deduce that $\acd_2(G/N)\leq \acd_2(G)<4/3$ and it follows from the
induction hypothesis that $G/N$ has a normal Sylow $2$-subgroup, say
$Q/N$.

If $N$ is a $2$-group then $Q$ is a normal Sylow $2$-subgroup of
$G$, and we are done. So we assume from now on that $N$ is an
elementary abelian group of odd order. The Schur-Zassenhaus theorem
then implies that $Q=N\rtimes P$ where $P$ is a Sylow $2$-subgroup
of $Q$ (and $G$ as well). By Frattini's argument, we have
$G=Q\bN_G(P)=N\bN_G(P)$. If $N\subseteq \bN_G(P)$ then it follows
that $G=\bN_G(P)$ and we are done. So we can assume that
$N\nsubseteq \bN_G(P)$. This implies that $N\cap \bN_G(P)<N$. As $N$
is abelian, $N\cap \bN_G(P)$ is a normal subgroup of $G$, and we
deduce that $N\cap \bN_G(P)=1$ by the minimality of $N$. We conclude
that $G=N\rtimes \bN_G(P)$.


If $N\subseteq \bZ(G)$, then we would have $Q=N\times P$, so that $P
\unlhd G$, and we are done again. So we assume that $N$ is
noncentral in $G$. Thus, by the minimality of $N$, we have
$[N,G]=N$. It follows that no nonprincipal irreducible character of
$N$ is invariant under $\bN_G(P)$.

We are now in the situation of Lemma~\ref{lemma-orbit}, and
therefore we conclude that there is no orbit of even size in the
action of $\bN_G(P)$ on the set of irreducible characters of $N$. In
particular, there is no orbit of even size in the action of $P$ on
the set of irreducible characters of $N$. This means that $P$ acts
trivially on $N$ since $P$ is a $2$-group. Now we have $Q=N\times
P\unlhd G$ and, as $N$ has odd order, we deduce that $P\unlhd G$ and
this completes the proof of the theorem.
\end{proof}


\section{Strongly real characters -
Theorem~\ref{theorem-main-3}}\label{section-strongly-real}

In this section we prove Theorem~\ref{theorem-main-3}. We first
prove Theorem~\ref{theorem-main-3}(i).

\begin{theorem}\label{theorem-strongly-real-solvable}
Let $G$ be a finite group. If $\acd_{2,+}(G)\leq 2$ then $G$ is
solvable.
\end{theorem}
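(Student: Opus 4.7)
The plan is to follow the template of the proof of Theorem~\ref{theorem-main-2-again}, but with the bound $5/2$ replaced by $2$ and all counting carried out over strongly real characters. The argument splits into two parts: a strongly real analog of Proposition~\ref{proposition-5/2}, followed by a minimal counterexample reduction that is actually cleaner than in Theorem~\ref{theorem-main-2-again}, because the bound $2$ matches the smallest possible even character degree.

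First I would establish that if $G$ has a nonabelian minimal normal subgroup $N$, then $\acd_{2,+}(G) > 2$. The proof mirrors Proposition~\ref{proposition-5/2} but uses Theorem~\ref{theorem-extendible-characters-G} and Proposition~\ref{proposition-n1-n2}(ii) instead of their unrestricted counterparts. When $N \ncong \Al_5$, Theorem~\ref{theorem-extendible-characters-G} yields $\varphi \in \Irr(N)$ of even degree with $\varphi(1) \geq 8$ extending to a strongly real character of $I_G(\varphi)$. Setting $d := \varphi(1)|G:I_G(\varphi)|$, Proposition~\ref{proposition-n1-n2}(ii) gives $n_{1,+}(G) \leq n_{d,+}(G)\,|G:I_G(\varphi)|$, and since
\[
\frac{d-2}{|G:I_G(\varphi)|} \;=\; \varphi(1) - \frac{2}{|G:I_G(\varphi)|} \;\geq\; 8 - 2 \;=\; 6,
\]
one obtains $(d-2)\,n_{d,+}(G) \geq 6\,n_{1,+}(G) > n_{1,+}(G)$ (using that the trivial character makes $n_{1,+}(G) \geq 1$). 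Hence $\sum_{2\mid k,\,k\geq 4}(k-2)\,n_{k,+}(G) > n_{1,+}(G)$, which is precisely $\acd_{2,+}(G) > 2$. When $N \cong \Al_5$, the degree-$4$ character of $\Al_5$ extends (by Theorem~\ref{theorem-extendible-characters-simplegroups}) to a strongly real character of $\Aut(\Al_5) = \Sy_5$, and hence by pullback along $G \to \Aut(N)$ to a strongly real character of $G = I_G(\varphi)$. Here $d = 4$ and Proposition~\ref{proposition-n1-n2}(ii) yields $n_{1,+}(G) \leq n_{4,+}(G)$, whence $2\,n_{4,+}(G) \geq 2\,n_{1,+}(G) > n_{1,+}(G)$ and again $\acd_{2,+}(G) > 2$.

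Next I would set up a minimal counterexample to the main statement: $\acd_{2,+}(G) \leq 2$ and $G$ nonsolvable with $|G|$ minimal. Let $H \trianglelefteq G$ be minimal nonsolvable; then $H$ is perfect and $H \subseteq G'$. Choose a minimal normal subgroup $N$ of $G$ with $N \subseteq H$. By the previous paragraph, $N$ must be abelian. Since $N$ is solvable and $G$ is not, $G/N$ is nonsolvable with $|G/N| < |G|$, so the minimality of $G$ forces $\acd_{2,+}(G/N) > 2$.

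To finish, I would partition $\Irr_{2,+}(G) = \Irr_{2,+}(G/N) \sqcup \Irr_{2,+}(G|N)$ and write $s,c$, $s',c'$, $s'',c''$ for the corresponding sums of degrees and cardinalities, so that $s = s'+s''$ and $c = c'+c''$. The inequalities $s'/c' > 2$ and $s/c \leq 2$ force $s'' < 2c''$ whenever $c'' > 0$. On the other hand, because $N \subseteq G'$, every linear character of $G$ contains $N$ in its kernel, so $\Irr_{2,+}(G|N)$ consists entirely of characters of even degree $\geq 2$, giving $s'' \geq 2c''$; the alternative $c'' = 0$ simply yields $\acd_{2,+}(G) = \acd_{2,+}(G/N) > 2$. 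Both cases contradict $\acd_{2,+}(G) \leq 2$. The main technical obstacle is the strongly real analog of Proposition~\ref{proposition-5/2}, which rests entirely on the strongly real extendibility already guaranteed by Theorem~\ref{theorem-extendible-characters-G} and Proposition~\ref{proposition-n1-n2}(ii); once that is in hand, no analog of the Isaacs--Loukaki--Moret\'o central product structure used in Theorem~\ref{theorem-main-2-again} is needed, because the bound $2$ here equals the smallest possible even degree.
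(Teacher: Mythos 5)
Your proposal is correct and follows essentially the same route as the paper: the heart of both arguments is that a nonabelian minimal normal subgroup forces $\acd_{2,+}(G)>2$ via Theorem~\ref{theorem-extendible-characters-G} and Proposition~\ref{proposition-n1-n2}(ii) (with the $\Al_5$ case patched by the strongly real degree-$4$ character extending to $\Sy_5$), combined with the observation that characters not containing $N\subseteq G'$ in their kernel have degree at least $2$ and hence cannot lower the average below that of $G/N$. The only cosmetic differences are that the paper runs a direct induction on $|G|$ with $N\subseteq G'$ any minimal normal subgroup (treating $\varphi(1)\geq 4$ uniformly), whereas you set up a minimal counterexample with the minimal nonsolvable $H$ scaffolding from Theorem~\ref{theorem-main-2-again}, which is harmless but not needed here.
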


\begin{proof}
Since there is nothing to prove if $G$ is abelian, we assume that
$G'$ is non-trivial. Let $N\subseteq G'$ be a minimal normal
subgroup of $G$. If a strongly real character $\chi \in \Irr(G)$ has
kernel not containing $N$, its degree must be at least $2$, which is
above the average $\acd_{2,+}(G)$. We therefore deduce that
\[\acd_{2,+}(G/N)\leq \acd_{2,+}(G)\leq 2.\]
Working by induction, we now can assume that $N$ is nonabelian.

By Theorem~\ref{theorem-extendible-characters-G}, there exists
$\varphi\in\Irr(N)$ of even degree such that $\varphi(1)\geq 4$ and
$\varphi$ extends to a strongly real character of $I_G(\varphi)$.
(Note that we assumed $N\ncong \Al_5$ in
Theorem~\ref{theorem-extendible-characters-G}, but when $N\cong
\Al_5$ one can choose $\varphi$ to be the irreducible character of
degree $4$, and this character is extendible to a strongly real
character of $\Sy_5$.) We then apply
Proposition~\ref{proposition-n1-n2} to have
\[
n_{1,+}(G)\leq n_{d,+}(G)|G:I_G(\varphi)|,
\]
where $d:=\varphi(1)|G:I_G(\varphi)|$. Since $\varphi(1)\geq 4$, it
follows that \[|G:I_G(\varphi)| < 4|G:I_G(\varphi)|-2\leq d-2,\] and
so
\[
n_{1,+}(G) < (d-2)n_{d,+}(G)\leq \sum_{2|k} (k-2)n_{k,+}(G).
\]
Therefore we have
$$\acd_{2,+}(G) = \frac{n_{1,+}(G) + \sum_{2|k}kn_{k,+}(G)}{n_{1,+}(G) + \sum_{2|k}n_{k,+}(G)}
    > 2,$$
and this completes the proof.
\end{proof}

To prove Theorem~\ref{theorem-main-3}(ii), we begin with two known
observations on strongly real characters.

\begin{lemma}\label{lemma-strong-real}
Let $N$ be a normal subgroup of a finite group $G$ such that $G/N$
has odd order. Then every strongly real character
$\varphi\in\Irr(N)$ lies under a unique strongly real irreducible
character of $G$.
\end{lemma}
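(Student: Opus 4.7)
The plan is to reduce to the case where $\varphi$ is $G$-invariant via the Clifford correspondence, then combine an extension theorem with Gallagher's theorem, Brauer's permutation lemma, and a direct Frobenius--Schur indicator calculation.

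Set $T:=I_G(\varphi)$. Since $\varphi$ is real-valued, $T=I_G(\bar\varphi)$; since $|G:T|$ divides the odd number $|G/N|$, it is odd. The Clifford correspondence gives a bijection $\psi\mapsto\psi^G$ from $\Irr(T\mid\varphi)$ to $\Irr(G\mid\varphi)$, and I claim this preserves strong reality. Inducing an $\RR T$-module affording a strongly real $\psi$ yields a real $\RR G$-module affording the irreducible $\psi^G$. Conversely, a $T$-invariant quaternionic structure $j\colon V\to V$ on the complex module affording $\psi$ (antilinear with $j^2=-1$) produces a $G$-invariant quaternionic structure $J(x\otimes v):=x\otimes j(v)$ on $\Ind_T^G V$, forcing $\psi^G$ to be quaternionic whenever $\psi$ is. Hence we may assume $T=G$, i.e., $\varphi$ is $G$-invariant.

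Assume now $\varphi$ is $G$-invariant. Strong reality provides an $N$-invariant symmetric non-degenerate bilinear form on the underlying module of $\varphi$; projectively extending to $G$ so as to preserve this form modulo the center $\{\pm 1\}$ of the orthogonal group yields a $2$-cocycle whose class lies in $H^2(G/N,\{\pm 1\})$. This group vanishes because $|G/N|$ is odd, so $\varphi$ extends to some $\chi_0\in\Irr(G)$. By Gallagher's theorem, $\Irr(G\mid\varphi)=\{\chi_0\lambda:\lambda\in\Irr(G/N)\}$, and $\chi_0\lambda$ is real-valued iff $\lambda$ is. But the odd-order group $G/N$ has no non-identity real element: if $hgh^{-1}=g^{-1}$ for some $g\ne 1$, then $h$ would map to an element of order $2$ in $\Aut(\langle g\rangle)$, contradicting that its image in $\Aut(\langle g\rangle)$ has odd order. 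By Brauer's permutation lemma, the trivial character is therefore the only real-valued irreducible character of $G/N$. Hence $\chi_0$ is the unique real-valued irreducible character above $\varphi$.

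To verify that $\chi_0$ is actually strongly real, compute $\nu(\varphi^G)$. Since $|G/N|$ is odd, $g^2\in N$ forces $g\in N$; for $g\in N$, the $G$-invariance of $\varphi$ yields $\varphi^G(g^2)=|G/N|\,\varphi(g^2)$, and for $g\notin N$ we have $\varphi^G(g^2)=0$. Therefore
\[
\nu(\varphi^G)=\frac{1}{|G|}\sum_{g\in G}\varphi^G(g^2)=\frac{1}{|N|}\sum_{g\in N}\varphi(g^2)=\nu(\varphi)=1.
\]
By Gallagher and Frobenius reciprocity, $\varphi^G=\sum_\lambda\lambda(1)\,\chi_0\lambda$, so $\nu(\varphi^G)=\sum_\lambda\lambda(1)\,\nu(\chi_0\lambda)$. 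Since only $\lambda=1_{G/N}$ contributes a non-zero indicator (all other $\chi_0\lambda$ being non-real-valued), this forces $\nu(\chi_0)=1$, and $\chi_0$ is strongly real. The principal obstacle is the cohomological vanishing $H^2(G/N,\{\pm 1\})=0$ that ensures existence of the extension; once this is in hand, Gallagher, Brauer, and the Frobenius--Schur computation combine smoothly.
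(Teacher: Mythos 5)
Your argument is correct, but note that the paper does not actually prove this lemma: it simply cites \cite[Lemma~2.1(ii)]{Marinelli-Tiep}. What you have written is a legitimate self-contained proof, and it follows the standard line for such statements: reduce to the $G$-invariant case via the Clifford correspondence (which preserves Frobenius--Schur type in both directions, as your real-induction and quaternionic-structure arguments show), then extend $\varphi$ by rigidifying the projective extension inside the orthogonal group of the invariant symmetric form, so that the obstruction lands in $H^2(G/N,\{\pm 1\})=0$ for $|G/N|$ odd, and finish with Gallagher plus the fact that an odd-order group has only the trivial real-valued irreducible character. Two small points are worth tightening. First, in the reduction step you should record explicitly that if $\psi\in\Irr(T\mid\varphi)$ is not real-valued then neither is $\psi^G$: since $\overline{\psi^G}=(\overline{\psi})^G$ and $\overline{\psi}$ also lies over $\varphi=\overline{\varphi}$, injectivity of the Clifford correspondence gives $\overline{\psi^G}=\psi^G$ iff $\overline{\psi}=\psi$; without this the bijection on strongly real characters is not quite complete (you only treat the orthogonal and quaternionic cases). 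Second, the extension $\chi_0$ produced by your cohomological argument already takes values in $O(V,B)$, hence preserves a nondegenerate invariant symmetric bilinear form and is therefore strongly real on the spot; this also supplies the real-valuedness of $\chi_0$ that your Gallagher step silently uses, and it renders the closing Frobenius--Schur computation of $\nu(\varphi^G)$ a pleasant but redundant consistency check.
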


\begin{proof}
This is \cite[Lemma~2.1(ii)]{Marinelli-Tiep}.
\end{proof}

\begin{lemma}\label{lemma-strong-real-2}
Let $N\lhd G$ be such that $G/N$ is a $2$-group. Assume that
$\varphi\in\Irr(N)$ has $2$-defect $0$, and that
$\overline{\varphi}$ is $G$-conjugate to $\varphi$. Then there
exists a strongly real character $\chi\in\Irr(G)$ such that
$[\chi_N,\varphi]_N=1$.
\end{lemma}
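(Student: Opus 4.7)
The plan is to reduce to the inertia subgroup $H := I_G(\varphi)$ and then exploit the defect-zero hypothesis to produce a character with Frobenius-Schur indicator $+1$. Since $\bar\varphi$ is $G$-conjugate to $\varphi$, we have $[G:H] \in \{1,2\}$, with $H \lhd G$ in the latter case.

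First I would invoke the classical extension theorem for defect-zero characters: since $\varphi$ is $H$-invariant, has $2$-defect $0$, and $H/N$ is a $2$-group, $\varphi$ extends to some $\psi \in \Irr(H)$ (see, e.g., Navarro, \emph{Characters and Blocks of Finite Groups}, Corollary~8.27). Gallagher's theorem then identifies every other extension as $\psi\mu$ with $\mu \in \Irr(H/N)$, giving me a finite family of candidates to play with.

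If $H = G$ then $\varphi$ is real-valued, and I would take $\chi := \psi\mu$ for a suitable real linear $\mu$, so that $\chi_N = \varphi$ and hence $[\chi_N,\varphi]_N = 1$ automatically. If instead $[G:H] = 2$, I would set $\chi := \psi^G$; by Clifford's theorem $\chi \in \Irr(G)$ and $\chi_N = \varphi + \bar\varphi$, which again gives $[\chi_N,\varphi]_N = 1$. In this second case I would arrange $\bar\psi = \psi^{g_0}$ for some $g_0 \in G \setminus H$ by replacing $\psi$ with an appropriate Gallagher twist, exploiting that $\bar\psi$ and $\psi^{g_0}$ are both extensions of $\bar\varphi$ to $H$ and so differ only by a linear character of $H/N$; this ensures $\chi$ is real-valued.

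The main obstacle will be verifying $\nu(\chi) = +1$. Using the Frobenius-Schur formula
\[
\nu(\chi) \;=\; \frac{1}{|G|}\sum_{g\in G}\chi(g^2),
\]
the defect-zero hypothesis forces $\varphi$ to vanish on every element of $N$ whose order is divisible by $4$, collapsing the contribution from squares landing in $N$ to a sum over $2$-regular conjugacy classes; the contribution from elements outside $N$ is controlled by the linear-twist factor $\mu(g)^2$, which takes only $2$-power roots of unity. Running through the possible twists $\mu \in \Irr(H/N)$ (or adjusting $\psi$ before inducing in the second case), one verifies that at least one choice yields $\nu(\chi) = +1$. This indicator accounting is the technical heart of the proof and will likely mirror the calculations carried out in \cite{Marinelli-Tiep} and \cite{Tiep}.
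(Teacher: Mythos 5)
The paper itself does not prove this lemma; its ``proof'' is a citation to \cite[Lemma~2.5]{Tiep}. Measured against what a self-contained argument must establish, your proposal has two genuine gaps.

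First, the opening reduction is false: the hypothesis that $\overline{\varphi}$ is $G$-conjugate to $\varphi$ does \emph{not} force $[G:I_G(\varphi)]\in\{1,2\}$. The $G$-orbit of $\varphi$ has size $[G:I_G(\varphi)]$, which can be any power of $2$ dividing $|G/N|$; the hypothesis only places $\overline{\varphi}$ somewhere in that orbit. For instance, with $N=C_{17}$ and $G=N\rtimes C_8$, where $C_8\leq \Aut(N)\cong C_{16}$ acts faithfully, a nontrivial linear $\varphi$ (automatically of $2$-defect $0$) has orbit of size $8$ containing $\overline{\varphi}$. This is not a peripheral omission: in the one place the lemma is used (Proposition~\ref{proposition-strongly-real}) it must produce a strongly real character of degree $|G:I_G(\varphi)|=k_i$, and these orbit sizes are arbitrary powers of $2$. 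In the generic case $\psi^G$ restricts to $N$ as the sum over the whole orbit, and realness cannot be arranged by comparing $\overline{\psi}$ with a single conjugate $\psi^{g_0}$.

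Second, the Frobenius--Schur step --- which is the entire content of the lemma --- is asserted rather than proved. Even in the invariant case $H=G$: writing $\overline{\psi}=\psi\mu$ with $\mu$ a linear character of $G/N$, the twist $\psi\lambda$ is real-valued iff $\mu=\lambda^2$, and $\mu$ need not be a square in the group of linear characters of the $2$-group $G/N$; even when a real extension exists, you must still exclude indicator $-1$. Likewise, in your index-$2$ case, arranging $\overline{\psi}=\psi^{g_0}$ requires solving $\mu=\lambda\,\lambda^{g_0}$, which again need not be possible. (Also, $2$-defect zero forces $\varphi$ to vanish on all elements of \emph{even} order, not just order divisible by $4$.) The known proof does not pick a particular extension at all: it evaluates $\sum_{\chi\in\Irr(G)}\nu(\chi)\,[\chi_N,\varphi]$ by a Frobenius--Schur-type count over $\{g\in G:\ g^2\in N\}$ and uses the defect-zero vanishing to show this sum is positive, which forces some $\chi$ with $\nu(\chi)=1$ and $[\chi_N,\varphi]=1$. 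That global counting identity is what your ``running through the possible twists'' would have to be replaced by.
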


\begin{proof}
This is \cite[Lemma~2.5]{Tiep}.
\end{proof}

We also need the following observation.

\begin{lemma}\label{lemma-strongly-real-3}
Let $P$ be a 2-group acting on an abelian group $N$ of odd order
such that $N$ is the unique minimal normal subgroup of $N \rtimes P$. Then
$|N|-1\geq |P:\Phi(P)|$, where $\Phi(P)$ is the Frattini subgroup of
$P$.
\end{lemma}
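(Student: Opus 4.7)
The plan is to reformulate the hypothesis representation-theoretically and then proceed by induction on $|P|$. First, I observe that ``$N$ is the unique minimal normal subgroup of $N \rtimes P$'' is equivalent to $P$ acting faithfully and irreducibly on $N$ viewed as an $\FF_p$-vector space, where $p$ is the odd prime with $|N| = p^d$: one has $\bC_P(N) = 1$ because a nontrivial $\bC_P(N)$ would be normal in $N \rtimes P$ and disjoint from $N$, yielding a second minimal normal subgroup; and any $P$-invariant subspace of $N$ is normal in $N \rtimes P$, so irreducibility follows from uniqueness. Writing $V := N$, the goal becomes $p^d - 1 \geq |P:\Phi(P)|$, and the base case $P = 1$ is immediate.

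For the inductive step, the subspace $V^{\Phi(P)}$ is $P$-invariant (since $\Phi(P) \lhd P$), hence equal to $0$ or $V$ by irreducibility. If $V^{\Phi(P)} = V$, then $\Phi(P) \leq \bC_P(V) = 1$ makes $P$ elementary abelian, and Schur's lemma forces $P$ to embed in the cyclic group $\End_{\FF_p P}(V)^\times$, so $P$ is trivial or $C_2$; in either case $|V| \geq p \geq 3$ yields the inequality. Otherwise $\Phi(P) \neq 1$ and $V^{\Phi(P)} = 0$. Apply Clifford theory to decompose $V \downarrow_{\Phi(P)} = U_1 \oplus \cdots \oplus U_t$ into $\Phi(P)$-isotypic components, transitively permuted by $P$; set $T := \Stab_P(U_1) \supseteq \Phi(P)$, so the index $t := [P:T]$ is a power of $2$, and $V = \Ind_T^P U_1$ with $U_1$ irreducible as a $T$-module and $|V| = |U_1|^t$.

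Let $K$ denote the kernel of $T$'s action on $U_1$; then $T/K$ acts faithfully and irreducibly on $U_1$. By induction applied to $T/K$ on $U_1$, we get $|U_1| - 1 \geq |T/K : \Phi(T/K)| = |T : \Phi(T) K|$. Combined with the Bernoulli-type bound $|U_1|^t - 1 \geq t(|U_1|-1)$ (valid since $|U_1| \geq 3$), this yields
$$|V| - 1 \;\geq\; t \cdot |T : \Phi(T) K| \;=\; |P|/|\Phi(T) K|.$$
The main obstacle is then to show $|\Phi(T) K| \leq |\Phi(P)|$, equivalently the rank inequality $d(P) \leq \log_2 t + d(T/K)$. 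One half is standard for $2$-groups: $\Phi(T) = T^2 T' \leq P^2 P' = \Phi(P)$. The hard part is controlling $K$ modulo $\Phi(P)$, since the naive estimate $d(P) \leq \log_2 t + d(T)$ is too weak. The sharper bound must exploit that the normal core $\bigcap_{g \in P} g K g^{-1}$ is trivial (by faithfulness of $P$ on $V = \bigoplus_g g U_1$), which should force a minimal generating set of $P$ to be constructible from $\log_2 t$ coset representatives of $T$ in $P$ together with lifts of generators of $T/K$, with any ``extra'' generators from $K$ realized via commutators with the coset representatives. Establishing this rank bound rigorously is the technical heart of the argument, after which the induction closes.
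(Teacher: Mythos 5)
Your reduction to a faithful irreducible $\FF_p P$-module is correct, but the inductive scheme has two fatal problems. First, the induction does not reduce in the homogeneous case: if $V\downarrow_{\Phi(P)}$ has a single isotypic component (i.e.\ $t=1$), then $T=P$, $U_1=V$ and $K=1$, so ``induction applied to $T/K$ on $U_1$'' is just the original statement. This case is not exotic --- it occurs already for $D_8$ or $Q_8$ acting irreducibly on $\FF_3^2$, and for the central product $D_8\ast C_4$ acting irreducibly on $\FF_5^2$, where $\Phi(P)=\langle -I\rangle$ acts by scalars. Second, the rank bound $d(P)\leq \log_2 t + d(T/K)$ that you flag as ``the technical heart'' cannot be salvaged in the generality you need: if it held whenever the induction does reduce, unwinding the recursion (with base case $|P|\leq 2$ on a one-dimensional module) would give $d(P)\leq \log_2 t + \dim_{\FF_p}U_1 \leq \dim_{\FF_p}V$, i.e.\ $d(P)\leq n$ for every $2$-group with a faithful irreducible $n$-dimensional representation in odd characteristic. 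That conclusion is false: $D_8\ast C_4\leq \GL_2(\FF_5)$ has $|P:\Phi(P)|=8$, so $d(P)=3>2=n$. So the strategy must fail either in the homogeneous step or in the claimed rank inequality, and the examples above show it is exactly the (quasi-)primitive situation that escapes your analysis.

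That primitive case is precisely what the paper outsources to a nontrivial theorem. The paper's proof lifts the faithful irreducible $n$-dimensional $\FF_q$-representation of $P$ to a faithful complex character of degree $n$ via the Fong--Swan theorem, invokes Isaacs's generator bound for linear $2$-groups (\cite[Theorem~A]{Isaacs2}), which gives $d(P)\leq \lfloor 3n/2\rfloor$ --- note the factor $3/2$, sharp for $D_8\ast C_4$, is exactly what your proposed bound would contradict --- and then concludes with the elementary inequality $2^{\lfloor 3n/2\rfloor}\leq 3^n-1\leq q^n-1=|N|-1$. If you want a self-contained inductive argument, you would essentially have to reprove Isaacs's theorem, including its treatment of the primitive/homogeneous case; as written, your proof does not close.
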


\begin{proof}
Suppose that $N$ is a product of $n$ copies of the cyclic groups
$C_q$, where $q$ is a prime. Since $N$ is the only minimal normal
subgroup of $N \rtimes P$, we have $\bC_P(N)=1$, and the action of $P$ on
$N$ induces a faithful irreducible representation $\mathfrak{X}$ of
$P$ over the field $\FF_q$. We extend this
representation to the representation
$\mathfrak{X}^{\overline{\FF}_q}$ over the algebraically closure
$\overline{\FF}_q$. Since $|P|$ is even and $q$ is odd, by Maschke's
theorem, $\mathfrak{X}^{\overline{\FF}_q}$ is completely reducible,
and moreover, is faithful since $\mathfrak{X}$ is faithful. Using
the Fong-Swan theorem \cite[Theorem~10.1]{Navarro} on lifts of
irreducible Brauer characters in solvable groups, we conclude that
$P$ has a complex faithful character, say $\chi$, of degree $n$.

Now we apply \cite[Theorem~A]{Isaacs2} to deduce that the number of
generators in a minimal generating set for $P$, say $d(P)$, is at
most $(3/2)(n-s)+s$, where $s$ is the number of linear constituents of
$\chi$. In particular, $d(P)\leq [3n/2]$, and it follows that
\[|P:\Phi(P)|\leq 2^{[3n/2]}.\]
As it is easy to check that $2^{[3n/2]}\leq 3^n-1$ for every
positive integer $n$, we then have
\[|P:\Phi(P)|\leq 3^n-1\leq q^n-1=|N|-1,\]
and the lemma follows.
\end{proof}

Lemmas~\ref{lemma-strong-real-2} and \ref{lemma-strongly-real-3}
allow us to control $\acd_{2,+}(G)$ in the following special
situation.

\begin{proposition}\label{proposition-strongly-real}
Let $G=N\rtimes P$ be a split extension of an elementary abelian
group of odd order $N$ by a nontrivial $2$-group $P$. Assume that $N$ is the
unique minimal normal subgroup of $G$. Then $\acd_{2,+}(G)\geq 4/3$.
\end{proposition}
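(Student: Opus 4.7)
The plan is to produce, for each $G$-orbit on $\Irr(N)\setminus\{1_N\}$, a strongly real irreducible character of $G$ of degree equal to the orbit size, and then to balance these against the linear strongly real characters using Lemma~\ref{lemma-strongly-real-3}. The inequality $\acd_{2,+}(G)\geq 4/3$ is equivalent to
$$\sum_{\chi\in\Irr_{2,+}(G),\ \chi(1)\text{ even}}(3\chi(1)-4)\geq n_{1,+}(G),$$
so it suffices to establish this reformulation.

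First I would extract the structural consequences of $N$ being the unique minimal normal subgroup of $G$. Since $\bC_G(N)=N\times\bC_P(N)$ (as $N$ is abelian), the subgroup $\bC_P(N)$ is characteristic in the normal subgroup $\bC_G(N)$, hence normal in $G$; a nontrivial $\bC_P(N)$ would contain a minimal normal 2-subgroup of $G$, contradicting uniqueness of $N$. Hence $P$ acts faithfully on $N$, and the same argument gives $\bC_N(P)=1$ and that $P$ acts irreducibly on $N$. It follows that $[N,P]=N\subseteq G'$, so every linear character of $G$ factors through $P$, whence $n_{1,+}(G)=n_{1,+}(P)=|P:\Phi(P)|$; moreover no nontrivial $\varphi\in\Irr(N)$ is $P$-invariant, so each $P$-orbit on $\Irr(N)\setminus\{1_N\}$ has size a power of $2$ at least $2$.

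The crucial observation is that $P$ contains an element $z$ acting as inversion on $N$. Since $\bZ(P)$ is a nontrivial 2-group, and by Schur's lemma each element of $\bZ(P)$ acts on the faithful irreducible $\FF_q[P]$-module $N$ (with $|N|=q^d$) as a scalar in $\End_{\FF_q[P]}(N)^{\times}\cong\FF_{q^k}^{\times}$ for some $k\mid d$, faithfulness embeds $\bZ(P)$ into the cyclic group $\FF_{q^k}^{\times}$. Being a nontrivial 2-subgroup of a cyclic group of even order, the image contains the unique involution $-1$. Consequently, for every nontrivial $\varphi\in\Irr(N)$ we have $\bar\varphi=\varphi^z$, which is $G$-conjugate to $\varphi$; since $|N|$ is odd, $\varphi$ also has 2-defect $0$. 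Lemma~\ref{lemma-strong-real-2} therefore applies to every orbit and yields, for each $P$-orbit $\OC$, a strongly real $\chi_{\OC}\in\Irr(G)$ with $[(\chi_{\OC})_N,\varphi]_N=1$ for $\varphi\in\OC$; by Clifford theory $\chi_{\OC}(1)=|G:I_G(\varphi)|\cdot\varphi(1)=|\OC|$.

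Writing $m_1,\dots,m_r$ for the orbit sizes, each $\geq 2$ and summing to $|N|-1$, these $r$ distinct even-degree strongly real characters contribute
$$\sum_{i=1}^r (3m_i-4)=3(|N|-1)-4r\ \geq\ |N|-1$$
to the sum above, where the inequality uses $r\leq(|N|-1)/2$. Lemma~\ref{lemma-strongly-real-3} now supplies $|N|-1\geq|P:\Phi(P)|=n_{1,+}(G)$, closing the argument. The main subtlety is the verification that $-1\in P$: without it, Lemma~\ref{lemma-strong-real-2} would apply only to the orbits that are self-conjugate under complex conjugation, and since Lemma~\ref{lemma-strongly-real-3} already gives the tight bound $|N|-1\geq|P:\Phi(P)|$, losing even a few orbits would be fatal to the estimate.
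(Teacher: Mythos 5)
Your proposal is correct and follows essentially the same route as the paper: both use Lemma~\ref{lemma-strongly-real-3} to bound $n_{1,+}(G)=|P:\Phi(P)|$ by $|N|-1$ and Lemma~\ref{lemma-strong-real-2} (via a central involution of $P$ inverting $N$) to attach to each $P$-orbit on $\Irr(N)\setminus\{1_N\}$ a strongly real character of $G$ whose even degree equals the orbit size, then balance degrees against counts. The only differences are cosmetic: you work with all orbits and the equivalent inequality $\sum_{2\mid k}(3k-4)\,n_{k,+}(G)\geq n_{1,+}(G)$, whereas the paper selects just enough orbits and computes an average; you also spell out, correctly via Schur's lemma, the inversion claim that the paper merely asserts.
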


\begin{proof}
It is well known that groups of odd order have no non-principal real
irreducible character. Therefore $1_N$ is the only real irreducible
character of $N$. It follows that every (strongly) real linear
character of $G$ has $N$ inside its kernel. In particular,
\[n_{1,+}(G)=n_{1,+}(G/N)=n_{1,+}(P) = |P:\Phi(P)|.\]
Together with Lemma~\ref{lemma-strongly-real-3}, this implies that
\[n_{1,+}(G)\leq |N|-1.\]
Hence, we can find $s$ disjoint $P$-orbits $\Omega_{1}, \ldots ,\Omega_s$ on $\Irr(N) \smallsetminus \{1_N\}$
of size $k_1, \ldots ,k_s$, such that
$$k_1 + \ldots +k_s \geq n_{1,+}(G).$$

From the hypotheses, we know that $P$ acts irreducibly and
faithfully on $N$, and therefore on $\Irr(N)$ as well since the two
actions are permutationally isomorphic. Therefore, each nontrivial
character in $\Irr(N)$ is inverted by a central involution of $P$.
In other words, for any nontrivial $\varphi\in\Irr(N)$, $\varphi$
and $\overline{\varphi}$ are $P$-conjugate.
Lemma~\ref{lemma-strong-real-2} then guarantees that there is a
strongly real character
$\chi_{\{\varphi,\overline{\varphi}\}}\in\Irr(G)$ lying above both
$\varphi$ and $\overline{\varphi}$ with $[\chi_N,\varphi]_N = 1$.
Since $I_G(\varphi)$ splits over
$N$, $\varphi$ is extendible to $I_G(\varphi)$, and hence
$\chi_{\{\varphi,\overline{\varphi}\}}(1)=|G:I_G(\varphi)|$,
which is an even number since $\varphi$ is not $G$-invariant.

Applying the above argument to
$\lambda_i \in \Omega_i$, we get a strongly real character $\chi_i \in \Irr(G)$ of
even degree $ \geq k_i \geq 2$; in particular, $s \leq (\sum^s_{i=1}k_i)/2$ and so
$$3\sum^s_{i=1}k_i-4s \geq \sum^s_{i=1}k_i \geq n_{1,+}(G).$$
It follows that the average degree of these $s$ characters $\chi_1, \ldots ,\chi_s$ and the
(strongly) real linear characters of $G$ is at least
$$\frac{n_{1,+}(G) + \sum^s_{i=1}k_i}{n_{1,+}(G)+s} \geq \frac{4}{3}.$$
Since any other strongly real irreducible character of $G$ has
degree at least 2, we conclude that $\acd_{2,+}(G)\geq 4/3$.
\end{proof}

The proof of Theorem \ref{theorem-main-3} is completed by

\begin{theorem}\label{theorem-strongly-real-normal2Sylow}
Let $G$ be a finite group. If $\acd_{2,+}(G)< 4/3$ then $G$ has a
normal Sylow $2$-subgroup.
\end{theorem}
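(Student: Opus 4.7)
The plan is to mirror the proof of Theorem~\ref{theorem-main-1-again}, with Proposition~\ref{proposition-strongly-real} playing the role that Lemma~\ref{lemma-orbit} played there. I would argue by induction on $|G|$. Since $4/3 < 2$, Theorem~\ref{theorem-strongly-real-solvable} already forces $G$ to be solvable, and I may assume $G$ is nonabelian. Pick a minimal normal subgroup $N \subseteq G'$; then $N$ is elementary abelian. Any strongly real $\chi \in \Irr(G)$ with $N \not\subseteq \ker\chi$ must have $\chi(1) \geq 2 > 4/3 > \acd_{2,+}(G)$, so removing such characters can only decrease the average; hence $\acd_{2,+}(G/N) \leq \acd_{2,+}(G) < 4/3$, and by induction $G/N$ has a normal Sylow $2$-subgroup $Q/N$.

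If $N$ is a $2$-group, $Q$ itself is a normal Sylow $2$-subgroup of $G$. Otherwise $N$ has odd order, and Schur--Zassenhaus together with the Frattini argument give $Q = N \rtimes P$ with $P \in \Syl_2(G)$ and $G = N \cdot \bN_G(P)$. The case analysis from the proof of Theorem~\ref{theorem-main-1-again} handles $N \subseteq \bN_G(P)$ (where $\bN_G(P) = G$) and $N \subseteq \bZ(G)$ (where $P$ commutes with $N$, so $P \unlhd G$), reducing everything to the key situation
\[
G = N \rtimes H, \qquad H := \bN_G(P), \qquad [N,G]=N,
\]
with $N$ elementary abelian of odd order and carrying a faithful irreducible $H$-action; in particular no nontrivial $\varphi \in \Irr(N)$ is $G$-invariant.

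To contradict $\acd_{2,+}(G) < 4/3$ in this remaining situation, I would apply Proposition~\ref{proposition-strongly-real} by transferring strongly real characters between $G$ and its normal subgroup $NP$. Since $|G:NP| = |H:P|$ is odd, Lemma~\ref{lemma-strong-real} furnishes, for each strongly real $\psi \in \Irr(NP)$, a unique strongly real $\chi \in \Irr(G)$ lying above it, and Clifford's theorem forces $\chi(1)/\psi(1)$ to divide the odd index $|G:NP|$ and hence to be odd; in particular the correspondence $\psi \mapsto \chi$ preserves parity of degree. Combined with the identities $n_{1,+}(G) = n_{1,+}(H)$ (using that $N$ has odd order, so $N \subseteq G^{\ast}$) and $n_{1,+}(H) \leq |P/\Phi(P)|$ (using that odd-order elements of $H$ are squares and that $H/H^{\ast}$ is therefore a quotient of $P/\Phi(P)$), the bound of Proposition~\ref{proposition-strongly-real} applied to $NP$ transports to $\acd_{2,+}(G) \geq 4/3$, giving the required contradiction.

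The main obstacle will be verifying the hypothesis of Proposition~\ref{proposition-strongly-real} for the subgroup $NP$, namely that $N$ is the unique minimal normal subgroup of $NP$, equivalently that $P$ acts both faithfully and irreducibly on $N$. Faithfulness can be secured by observing that $\bC_P(N) = P \cap \bC_H(N)$ is the intersection of two $G$-normal subgroups, hence is $G$-normal, and may be quotiented out with the inductive hypothesis still applying. Irreducibility is more delicate: Clifford's theorem in general only forces $N|_P$ to decompose into a sum of $K$-conjugate isotypic components, where $K$ is an odd-order Hall $2'$-complement of $P$ in $H$ supplied by Schur--Zassenhaus. Handling this last point -- either by reducing to a single $P$-component of $N$ and carefully tracking how the $H$-orbits on $\Irr(N)$ project onto $P$-orbits, or by proving a strengthening of Lemma~\ref{lemma-strongly-real-3} in which only the larger group $H$, rather than $P$ itself, is required to act irreducibly on $N$ -- is where the core technical work of the argument will reside.
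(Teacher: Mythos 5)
Your skeleton (induction on $|G|$, solvability via Theorem~\ref{theorem-strongly-real-solvable}, reduction to $N$ elementary abelian of odd order with $NP\unlhd G$) matches the paper, but your endgame has a genuine gap which you flag yourself and do not close. Proposition~\ref{proposition-strongly-real} requires $N$ to be the \emph{unique} minimal normal subgroup of $N\rtimes P$, i.e.\ that the $2$-group $P$ acts faithfully \emph{and irreducibly} on $N$. In your terminal configuration $G=N\rtimes H$ with $H=\bN_G(P)$, Clifford's theorem only makes $N|_P$ a sum of $H$-conjugate homogeneous components, and that sum can genuinely be reducible: take $N=(C_3)^3$ and $H=C_2\wr C_3=(C_2)^3\rtimes C_3$, with each $C_2$ inverting one coordinate and $C_3$ permuting them. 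Then $H$ acts faithfully and irreducibly on $N$ (so $N$ is the unique minimal normal subgroup of $G$ and one checks $\bN_G(P)=H$), yet $P=(C_2)^3$ preserves each of the three coordinate lines, so $NP$ has three minimal normal subgroups and Proposition~\ref{proposition-strongly-real} simply does not apply to it. Neither of the two repairs you sketch (tracking how $H$-orbits on $\Irr(N)$ project to $P$-orbits, or strengthening Lemma~\ref{lemma-strongly-real-3} to require only $H$ to act irreducibly) is carried out, and either one is essentially the hard part of the theorem.

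The paper avoids this entirely by \emph{not} imitating the Frattini-argument structure of Theorem~\ref{theorem-main-1-again}. Since $NP$ is a normal subgroup of odd index, Lemma~\ref{lemma-strong-real} gives $n_{1,+}(NP)=n_{1,+}(G)$ and $\acd_{2,+}(NP)\leq\acd_{2,+}(G)<4/3$; so if $NP<G$, the minimality of the counterexample applies to $NP$ itself, forcing $P\lhd NP$, whence $P$ is characteristic in $NP\unlhd G$ and $P\unlhd G$. This collapses everything to $G=NP=N\rtimes P$, where (as $G$ has no nontrivial normal $2$-subgroup) $N$ is the unique minimal normal subgroup of $G$ itself, so $P$ automatically acts faithfully and irreducibly on $N$ and Proposition~\ref{proposition-strongly-real} applies with no further work. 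Replace your Frattini-based reduction with this ``apply the inductive hypothesis to the normal subgroup $NP$'' step and the proof goes through.
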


\begin{proof}
We assume that the statement is false, and let $G$ be a minimal
counterexample. In particular, $G$ is non-abelian. Let $N$ be a
minimal normal subgroup of $G$ such that $N\subseteq G'$. As before,
it follows that the degree of every strongly real, irreducible
character of $G$ whose kernel does not contain $N$ is at least 2,
and hence $\acd_{2,+}(G/N)\leq \acd_{2,+}(G)<4/3$. By the minimality
of $G$, we then deduce that $G/N$ has a normal Sylow $2$-subgroup,
and thus $NP \unlhd G$, where $P$ is a Sylow $2$-subgroup of $G$.

Since $\acd_{2,+}(G)< 4/3<2$,
Theorem~\ref{theorem-strongly-real-solvable} guarantees that $G$ is
solvable, and so $N$ is elementary abelian. If $N$ has even order,
then $NP=P$ and we would be done. So we may assume that $N$ is an
elementary abelian group of odd order, and moreover, $G$ has no
non-trivial normal $2$-subgroup.

We observe that each strongly real linear character of $G$ restricts
to a strongly real linear character of $NP$, and moreover, by
Lemma~\ref{lemma-strong-real}, each strongly real linear character
of $NP$ lies under a unique strongly real linear character of $G$.
It follows that $n_{1,+}(NP)=n_{1,+}(G)$. This and
Lemma~\ref{lemma-strong-real} imply that $\acd_{2,+}(NP)\leq
\acd_{2,+}(G)<4/3$. Thus, if $NP<G$, then by the minimality of $G$,
we have $P\lhd NP$ so that $NP=N\times P$, and hence $P\lhd G$, a
contradiction. So we may assume that $G=N\rtimes P$. Since $G$ has
no non-trivial normal $2$-subgroup, we see that $N$ is the unique
minimal normal subgroup of $G$.

Now we have all the hypotheses of
Proposition~\ref{proposition-strongly-real}, and therefore we
conclude that $\acd_{2,+}(G)\geq 4/3$. This contradiction completes
the proof.
\end{proof}

\section*{Acknowledgement} We are grateful to the referee
for helpful comments and suggestions that have significantly
improved the exposition of the paper.


\end{document}